\newcommand{\handlethispage}{}
\newtheorem{xthm}{Theorem}
\newtheorem{xlem}[xthm]{Lemma} 
\newtheorem{xprop}[xthm]{Proposition}
\newtheorem{xcor}[xthm]{Corollary}
\newtheorem{xdfn}[xthm]{Definition}
\newtheorem{xfact}[xthm]{Fact}
\newif\ifonly 
\let\handlethispage\AtBeginShipoutDiscard}
\clearpage\pagenumbering{arabic}
\let\handlethispage\relax
\newenvironment{thm}{\begin{xthm}}{\end{xthm}}
\newenvironment{lem}{\begin{xlem}}{\end{xlem}}
\newenvironment{prop}{\begin{xprop}}{\end{xprop}}
\newenvironment{cor}{\begin{xcor}}{\end{xcor}}
\newenvironment{dfn}{\begin{xdfn}}{\end{xdfn}}
\providecommand{\R}{}
\providecommand{\Z}{}
\providecommand{\N}{}
\renewcommand{\R}{\mathbb{R}}
\renewcommand{\Z}{\mathbb{Z}}
\renewcommand{\N}{{\mathbb N}}
\newcommand{\E}[1]{{\mathbf E}\left[#1\right]}
\newcommand{\e}{{\mathbf E}}
\newcommand{\V}[1]{{\mathbf{Var}}\left\{#1\right\}}
\newcommand{\p}[1]{{\mathbf P}\left\{#1\right\}}
\newcommand{\psub}[2]{{\mathbf P}_{#1}\left\{#2\right\}}
\newcommand{\I}[1]{{\mathbf 1}_{[#1]}}
\newcommand{\esub}[1]{{\mathbf E_{#1}}}
\newcommand{\pcond}[3]{\mathbf{P}_{#3}\!\left(#1\;\middle\vert\;#2\right)}
 \newcommand{\bag}{\begin{align}}
\newcommand{\bags}{\begin{align*}}
\newcommand{\eag}{\end{align*}}
\newcommand{\eags}{\end{align*}}
\newcommand\cU{{\mathcal U}}
\newcommand{\pran}[1]{\left(#1\right)}
\providecommand{\eps}{}
\renewcommand{\eps}{\epsilon}
\providecommand{\ora}[1]{}
\renewcommand{\ora}[1]{\overrightarrow{#1}}
\newcommand\urladdrx[1]{{\urladdr{\def~{{\tiny$\sim$}}#1}}}
\newcommand{\GW}{\mathrm{GW}}
\newcommand{\h}{\mathop{\mathrm{ht}}}
\newcommand{\w}{\mathop{\mathrm{wid}}}
\newcommand{\gw}{\mathop{\mathrm{GW}}}
\numberwithin{equation}{section}
\numberwithin{xthm}{section}
\newcommand{\rb}{\ensuremath{\mathrm{b}}}
\begin{document}

\title{Most trees are short and fat} 
\author{Louigi Addario-Berry}
\address{Department of Mathematics and Statistics, McGill University, Montr\'eal, Canada}
\email{louigi.addario@mcgill.ca}
\date{March 30, 2017} 
\urladdrx{http://www.problab.ca/louigi/}

\subjclass[2010]{60J80,60G50,60E15,05C05} 

\begin{abstract} 
This work proves new probability bounds relating to the height, width, and size of Galton-Watson trees. 
For example, if $T$ is any Galton-Watson tree, 
and $H$, $W$, and $|T|$ are the height, width, and size of $T$, respectively, then $H/W$ has sub-exponential tails and $H/|T|^{1/2}$ has sub-Gaussian tails. 

Although our methods apply without any assumptions on the offspring distribution, when information is provided about the distribution the method can be adapted accordingly, and always seems to yield tight bounds.
\end{abstract}

\maketitle

\begin{quote}
``Il faudrait souhaiter que \ldots les probabilistes de la jeune g\'en\'eration, soit par complaisance pour les m\'ethodes purement analytiques, soit gr\^ace \`a l'engouement pour la puissance - d'ailleurs parfaitement justifi\'ee - des m\'ethodes li\'ees aux distributions dans les espaces fonctionnels, n'oublient point les m\'ethodes directes.
''\\
A.N. Kolmogorov, {\em Sur les propri\'et\'es des fonctions de concentrations de M. P. L\'evy, \cite{kolmogorov58}.}
\end{quote}
\section{Introduction}\label{sec:intro} 
Given a rooted tree $T$, write $T_n$ for the set of nodes of $T$ at distance $n$ from the root. The {\em width} of $T$ is $\w(T) = \sup_{n \ge 0} |T_n|$; the height of $T$ is $\h(T) = \sup\{n \ge 0: |T_n| \ne 0\}$. The {\em volume} of $T$ is its number of nodes, denoted $|T|$. We write $u \in T$ to mean that $u$ is a node of $T$. Note that if either $\w(T)=\infty$ or $\h(T)=\infty$ then $|T|=\infty$. 

Next, given a probability distribution $\mu$ supported on the the non-negative integers $\N$, write  
$T\sim \mathrm{GW}(\mu)$ if $T$ is a Galton-Watson tree with offspring distribution $\mu$. In other words, each node of $T$ gives birth to a random, $\mu$-distributed number of offspring, and the number of offspring is independent for distinct nodes. 

This work presents new probabilistic relations between the height, width, and volume of general Galton-Watson trees. Here is the most general result. 
\begin{thm}\label{thm:general}
There exists an absolute constant $C > 0$ such that the following holds. Fix a probability measure $\mu$ with support $\N$, and let $T\sim \mathrm{GW}(\mu)$. 
Then for all $x \ge 1$, 
\[
\p{\h(T) > C x \frac{\w(T)}{1-\mu(1)}} \le e^{-x}\, \, \mbox{and} \, \,
\p{\h(T) > C x \Big(\frac{|T|}{1-\mu(1)}\Big)^{1/2}} \le e^{-x^2}\, .
\]
\end{thm}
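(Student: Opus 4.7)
My plan is to first reduce to the case $\mu(1) = 0$ via a classical edge-stretching decomposition. One can realize $T \sim \GW(\mu)$ by sampling $T^* \sim \GW(\nu)$, where $\nu(k) = \mu(k)/(1-\mu(1))$ for $k \neq 1$ and $\nu(1) = 0$, and then independently replacing each edge of $T^*$ by a path of length $1 + G$ with $G \sim \mathrm{Geom}(1-\mu(1))$ (so $\p{G=k} = \mu(1)^k(1-\mu(1))$ for $k \ge 0$). Sums of independent such geometrics are sub-exponential at scale $1/(1-\mu(1))$, so heights, sizes, and widths of $T$ relate to those of $T^*$ by a factor $\sim 1/(1-\mu(1))$ with sub-exponential deviations; this is precisely the source of the $(1-\mu(1))^{-1}$ factor in the final statement.

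Having reduced to $\nu(1)=0$, the central aim is to establish the conditional sub-Gaussian bound
\[
\p{\h(T^*) > Cx\sqrt{n} \mid |T^*|=n} \le e^{-x^2}
\]
uniformly in $n$. Unconditioning via $\p{\h(T^*) > Cx\sqrt{|T^*|}} = \sum_n \p{|T^*|=n}\,\p{\h(T^*) > Cx\sqrt n\mid |T^*|=n} \le e^{-x^2}$ then yields the desired sub-Gaussian tail. The height--width bound follows from the deterministic inequality $|T^*| \le \w(T^*)(\h(T^*)+1)$: on the event $\{\h(T^*) > h, \w(T^*) \le w\}$ we have $|T^*| \le 2wh$, so taking $x = \sqrt{h/(2w)}$ in the sub-Gaussian bound yields $\p{\h(T^*) > h, \w(T^*) \le w} \le e^{-ch/w}$.

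To prove the conditional sub-Gaussian bound, I would use a spine decomposition. Condition on $\h(T^*) \ge h$ and decompose $T^*$ around a leftmost root-to-depth-$h$ path $v_0, \ldots, v_h$. Since $\nu(1)=0$, each $v_i$ with $i < h$ has offspring count at least $2$ (one of them being $v_{i+1}$), and hence contributes at least one off-spine child. By a Geiger-type spine decomposition, these off-spine subtrees are essentially independent $\GW(\nu)$ trees (with some conditioned on not reaching the residual depth, which only helps for lower-bound estimates), so $|T^*| \ge h + \sum_{i=1}^{h} |\tau_i|$ for independent $\GW(\nu)$ subtrees $\tau_i$. The main obstacle is to obtain the required tail bound on $\p{\sum_i |\tau_i| \le n - h}$ uniformly in $\nu$: when $|\tau_i|$ is infinite with positive probability (supercritical case) or only polynomially-tailed (critical case), standard exponential-moment arguments fail. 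The remedy is a truncation argument: apply Chernoff-type inequalities to the number of $i$ with $|\tau_i|$ exceeding a threshold chosen depending on $n, h, x$, and conclude that the event $\sum_i |\tau_i| \le n-h$ forces an anomalous number of small subtrees, with probability decaying sub-Gaussianly in the deviation from the typical order $h \cdot E[|\tau| \wedge R]$.
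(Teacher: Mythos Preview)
Your approach is genuinely different from the paper's. The paper never uses edge-stretching or spine decompositions; instead it proves the deterministic inequality $\h(T)\le 3\sum_{v\in T}1/S(v)$, where $S(v)$ is the breadth-first queue length at $v$ (Proposition~\ref{prop:ht_bd}), and then, via the identification of the queue process with a random walk stopped at $0$, reduces everything to bounding $\psub{1}{H(\sigma)\ge C x\,\sigma^{1/2}/(1-p_0)^{1/2}}$ with $H(\sigma)=\sum_{i<\sigma}1/S_i$ and $p_0=\p{X=0}=\mu(1)$. That bound (Theorem~\ref{thm:hvol_finvar}) comes from a multi-scale decomposition of the walk together with Kesten's concentration-function inequality; the $(1-\mu(1))^{-1}$ appears naturally because $Q(X,1/2)\ge \mu(1)$. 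The width inequality is then deduced from the volume inequality via $\h(T)\w(T)\ge |T|$, exactly as you propose.

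Your plan has real gaps. First, the claim that widths of $T$ and $T^*$ differ by a factor $\sim 1/(1-\mu(1))$ is false: stretching can decrease the width (take a complete binary tree of depth~2 and stretch one root-edge to length $100$; the width drops from $4$ to $3$). This is not fatal since you can derive the width bound from the volume bound for $T$ directly, but the height transfer is also delicate: $\h(T)$ is a maximum over all leaves of sums of independent geometrics, and a union bound costs a factor $|T^*|$ that is not obviously absorbed. Second, the conditional bound $\p{\h(T^*)>Cx\sqrt n\mid |T^*|=n}\le e^{-x^2}$ is strictly stronger than what you need, and the paper explicitly flags the size-conditioned version of Theorem~\ref{thm:general} as open (Section~\ref{sec:conc}, item~1(a)); you should aim for the unconditional bound instead. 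Third, and most importantly, the spine sketch does not deliver a bound \emph{uniform in $\nu$}. The off-spine subtrees can have size $1$ with probability $\nu(0)$ arbitrarily close to $1$, so no Chernoff/truncation argument on the $|\tau_i|$ alone can give a universal constant; one has to trade this against the fact that $\p{\h(T^*)\ge h}$ is then also very small, and your outline gives no mechanism for that balancing. Furthermore, with the leftmost spine the off-spine subtrees to the left of the spine child are conditioned to have small height, hence are stochastically smaller --- the wrong direction for lower-bounding $|T^*|$ --- and at a given spine vertex there need not be any unconditioned off-spine subtree at all. In short, the reduction to $\nu(1)=0$ is a nice idea, but the core difficulty (a uniform sub-Gaussian bound for $\h/|T|^{1/2}$ over all $\nu$ with $\nu(1)=0$) is exactly the content of the theorem and is not dispatched by the spine sketch.
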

In the above theorem we write $\mu(1)$ instead of $\mu(\{1\})$. We use similar shorthand in several other places. Also, we say $\mu$ is {\em subcritical} if $\sum_i i\mu(i) < 1$, {\em critical} if $\sum_i i\mu(i)=1$, and {\em supercritical} if $\sum_i i\mu(i)>1$; in the third case, the sum may be infinite.

The method we introduce is rather general, in that when additional information is provided about the tail behaviour of $\mu$, the preceding bounds can be correspondingly strengthened. For example, we have the following two theorems. 
\begin{thm}\label{thm:fixed_var}
There exists an absolute constant $C > 0$ such that the following holds. Fix a critical or subcritical probability measure $\mu$ with support $\N$ and with $\sum_i i(i-1)\mu(i)=v \in (0,\infty)$, and let $T\sim \mathrm{GW}(\mu)$. 
Then there exists $x_0$ depending on $\mu$ such that for all $x \ge x_0$
\[
\p{\h(T) \ge Cx \w(T)} \le e^{-vx}\, \, \mbox{and} \, \,
\p{\h(T) \ge Cx |T|^{1/2}} \le e^{-vx^2}\, .
\]
Moreover, if $\mu$ is critical then we may choose $x_0$ so that for all $x \ge x_0$  and all $n \ge 1$, 
\[
\p{\h(T) \ge Cx |T|^{1/2}~|~|T| \ge n} \le e^{-vx^2} \, .
\]
\end{thm}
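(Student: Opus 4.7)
The plan is to prove the two unconditional bounds as strengthenings of Theorem~\ref{thm:general}, then to derive the conditional bound in the critical case from a uniform conditional estimate for $\h(T)$ given $|T|=k$.

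For the unconditional bounds, I would revisit the proof of Theorem~\ref{thm:general} and sharpen the one step where the factor $1-\mu(1)$ appears. That step uses only the first-moment observation that a node has offspring count different from $1$ with probability $1-\mu(1)$. Under the hypothesis $v<\infty$, this can be replaced by a second-moment control: the generation-size process $(|T_n|)_{n\ge 0}$ is a (super)martingale whose one-step conditional variance is $v\cdot|T_n|$, and a direct Chernoff bound on the conditional moment generating function
\[
\E{e^{\lambda(|T_{n+1}|-|T_n|)}\mid |T_n|}=\bigl(\E{e^{\lambda(X-1)}}\bigr)^{|T_n|},
\]
valid in the range of $\lambda$ where $\log\E{e^{\lambda(X-1)}}\approx\tfrac12\lambda^2 v$, yields sub-Gaussian/sub-exponential tails for $|T_n|$ at rate $v$ rather than rate $1-\mu(1)$. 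Inserting this sharper per-step estimate into the rest of the Theorem~\ref{thm:general} scheme gives both unconditional inequalities, with an absolute $C$ and a $\mu$-dependent threshold $x_0$ coming from the validity range of the Chernoff expansion.

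For the moreover clause, I would reduce to the uniform-in-$k$ conditional estimate
\[
\p{\h(T)\ge Cx\sqrt{k}\mid|T|=k}\le e^{-vx^2}\qquad(k\ge 1,\ x\ge x_0).
\]
Given this, summing over $k\ge n$ yields
\[
\p{\h(T)\ge Cx|T|^{1/2},\,|T|\ge n}=\sum_{k\ge n}\p{|T|=k}\p{\h(T)\ge Cx\sqrt{k}\mid|T|=k}\le e^{-vx^2}\p{|T|\ge n},
\]
which gives the conditional bound after dividing through by $\p{|T|\ge n}$. To establish the conditional estimate I would use the contour (Harris) walk $C=(C_t)$ of the tree, a nearest-neighbor walk on $\Z_{\ge 0}$ whose maximum is exactly $\h(T)$. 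Conditional on $|T|=k$, $C$ is a bridge of length $2(k-1)$ whose law (inherited from the Galton--Watson tree via the Lukasiewicz walk $S_i=\sum_{j=1}^i(X_j-1)$ and Vervaat's cyclic-shift transformation) carries a variance structure governed by $v$. Standard maximal inequalities for random-walk bridges, combined with a local limit theorem controlling $\p{S_k=-1}$, then yield a non-asymptotic bound of sub-Gaussian form $\p{\max_tC_t\ge y\mid|T|=k}\le e^{-cy^2v/k}$, from which the required estimate follows by setting $y=Cx\sqrt{k}$.

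The principal obstacle will be the quantitative sub-Gaussian control of $\max_tC_t$ under the conditioning $|T|=k$: the contour walk's step distribution depends intricately on the tree being built, so obtaining a clean bound with the correct $v$-dependence requires either coupling to a more tractable process (such as the Le Gall--Le Jan height process coded by the Lukasiewicz walk) or a careful reversibility argument for the contour bridge combined with sharp estimates on $\p{|T|=k}$. A secondary point is uniformity for small $k$: when $k<(Cx)^2$ the deterministic inequality $\h(T)\le|T|-1<Cx\sqrt{k}$ makes the conditional probability vanish, so the walk analysis is only needed for $k\ge(Cx)^2$, safely within the range where the bridge is well-approximated by its diffusive scaling limit.
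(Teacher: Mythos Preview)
Your plan has two substantive gaps.

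\textbf{Unconditional bounds.} You propose a Chernoff/MGF argument on the generation sizes, using $\log\E{e^{\lambda(X-1)}}\approx\tfrac12\lambda^2v$. But the only moment hypothesis is $v=\sum_i i(i-1)\mu(i)<\infty$; nothing prevents $\mu$ from having, say, $\mu(k)\asymp k^{-4}$, in which case $\E{e^{\lambda X}}=\infty$ for every $\lambda>0$ and the Chernoff expansion never gets off the ground. A truncation does not obviously save this, because the tail contribution to $|T_{n+1}|-|T_n|$ is exactly what you need to control to get height bounds. The paper avoids MGFs entirely: it works with the breadth-first queue process $(S_i)$ via Proposition~\ref{bfq_equiv}, uses the deterministic inequality $\h(T)\le 3\sum_v 1/S(v)$ of Proposition~\ref{prop:ht_bd}, and then bounds $H(\sigma)=\sum_{i<\sigma}1/S_i$ through Kesten's concentration-function inequality $Q(S_n,L)\le CL/(n\E{(X_1-X_2)^2\I{|X_1-X_2|\le L}})^{1/2}$, which needs only the truncated second moment. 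The variance $v$ enters precisely here, replacing the cruder $1-\p{X=0}$ bound used for Theorem~\ref{thm:general}.

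\textbf{Conditional bound.} You aim to prove $\p{\h(T)\ge Cx\sqrt{k}\mid|T|=k}\le e^{-vx^2}$ uniformly in $k$, then sum. This is strictly stronger than what the theorem asserts, and as the paper notes in its conclusion, obtaining such a size-conditioned bound with the correct $v$-dependence is an open strengthening of \cite{addario-berry2013}. Your route via the contour walk is also not workable as stated: the contour walk's increments are not i.i.d.\ (an up-step corresponds to ``next child'', a down-step to ``backtrack to parent''), so ``standard maximal inequalities for random-walk bridges'' do not apply; the right encoding is the Lukasiewicz walk, but then the height is not its maximum. The paper sidesteps exact conditioning altogether: Theorem~\ref{thm:var_precise} gives a joint bound of the form
\[
\p{\h(T)\ge x|T|^{1/2},\,|T|\ge n}\le \frac{Cx}{n^{1/2}}e^{-vx^2/C},
\]
and in the critical case the cycle lemma plus the local CLT give $\p{|T|\ge n}\gtrsim (vn)^{-1/2}$, so dividing yields the conditional bound directly. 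No size-$k$ conditioning is ever needed.
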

Theorem~\ref{thm:fixed_var} is related to a result of \cite{addario-berry2013}, which proves sub-Gaussian tail bounds for $h(T)/|T|^{1/2}$ when $T$ has finite variance. The results of \cite{addario-berry2013} apply to Galton-Watson trees conditioned to have a fixed size, and in this sense are stronger than Theorem~\ref{thm:fixed_var}. However, in \cite{addario-berry2013} the bounds become weaker as the variance $v$ increases, whereas the bounds of Theorem~\ref{thm:fixed_var} exhibit the correct dependence on the variance. 
\begin{thm}\label{thm:stable}
There exists an absolute constant $C > 0$ such that the following holds. Fix a critical or subcritical probability measure $\mu$ with support $\N$, and let $T\sim \mathrm{GW}(\mu)$. 
Suppose that there exists $\alpha \in (1,2]$ and $M > 0$ such that $\mu([i,\infty)) \le Mi^{-\alpha}$ for all $i \ge 1$. 
Then for all $x \ge 1$, 
\[
\p{\h(T) \ge  x \left(\frac{C M}{\alpha-1}\right)^{\alpha} \w(T)^{\alpha-1}}\le e^{-x} \,\, \mbox{and} \, \,
\p{\h(T) \ge C M x \frac{|T|^{(\alpha-1)/\alpha}}{(\alpha-1)}} \le e^{-x^\alpha}\, .
\]
\end{thm}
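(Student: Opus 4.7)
The plan is to adapt the methods of Theorems~\ref{thm:general} and~\ref{thm:fixed_var}, replacing the second-moment estimates that drive the finite-variance case with $\alpha$-moment estimates afforded by the stable tail assumption $\mu([i,\infty)) \le Mi^{-\alpha}$. I would first prove the width--height inequality and then derive the volume--height inequality by stratifying on $|T|$.

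For the width--height inequality, I would work with the generation-size process $(Z_n)_{n\ge 0}$ and aim to bound
\[
\p{Z_h \ge 1,\, \w(T) \le w}
\]
by an exponential in $-h$. The natural route is to construct a Lyapunov-type function $\phi : \{0,1,\ldots,w\} \to [0,1]$ with $\phi(0) = 0$ and $\phi(1) > 0$, together with a contraction rate $\rho < 1$ satisfying
\[
\E{\phi(Z_{n+1})\I{Z_{n+1}\le w} \mid Z_n = k} \le \rho\, \phi(k), \quad k \in \{1,\ldots,w\}.
\]
Iterating across $h$ generations and invoking Markov's inequality on $\phi(Z_h)$ gives $\p{Z_h \ge 1, \w(T)\le w} \le \rho^h$, from which the first inequality follows once one establishes $\rho \le 1 - c(\alpha-1)^\alpha/((CM)^\alpha w^{\alpha-1})$ for an absolute constant $c>0$.

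The crux is the one-step bound, which reduces to a precise estimate on the probability generating function $f_\mu(s) = \sum_i \mu(i)s^i$ near $s = 1$. A summation-by-parts argument using the tail assumption yields
\[
1 - f_\mu(1-t) \le t + \frac{C_\alpha M}{\alpha-1}\, t^\alpha, \quad t \in [0,1],
\]
complemented in the critical or subcritical regime by the lower bound $1 - f_\mu(1-t) \ge t$. Choosing $\phi(k) = 1-(1-t)^k$ with $t \asymp 1/w$ converts this into the desired contraction, and the factor $w^{\alpha-1}$ appearing in $\rho$ emerges because at the natural scale $t \asymp 1/w$ the heavy-tail ``penalty'' $C_\alpha M t^\alpha/(\alpha-1)$ is of order $w^{1-\alpha}$.

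For the volume--height inequality, the strategy is to stratify the event $\{\h(T) \ge h\}$ by the dyadic range of $|T|$ and, for each range $|T| \in [V, 2V)$, take the optimal width cutoff $w \asymp V^{1/\alpha}$. On $\{\w(T) \le w\}$ the first inequality applies and, after balancing $h$ against $w$, produces a stretched-exponential tail of the form $e^{-x^\alpha}$; the complementary event $\{\w(T) > w, |T| \le 2V\}$ is handled by a direct one-generation estimate, again driven by the generating-function bound above. The principal technical obstacle is the one-step generating-function estimate: it must be tight simultaneously in $M$ and $\alpha$ to reproduce the stated constants, and the Lyapunov function must tolerate the rare but large jumps of $(Z_n)$ without appealing to exponential moments of $\mu$, which the stable tail assumption does not provide.
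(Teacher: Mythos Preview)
Your approach is genuinely different from the paper's. The paper never touches the generation-size process: it encodes $T$ by its breadth-first queue, proves the deterministic inequality $\h(T)\le 3\sum_{v\in T}1/S(v)$ (Proposition~\ref{prop:ht_bd}), and then controls $H(\sigma)=\sum_{i<\sigma}1/S_i$ for the associated random walk via the multi-scale decomposition of Section~\ref{sec:scales} together with Kesten's concentration-function bounds, culminating in Theorem~\ref{thm:stable_attempt}. It proves the \emph{volume}--height bound first and deduces the \emph{width}--height bound from the elementary inequality $\h(T)\w(T)\ge |T|$, the reverse of your proposed order.

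There is, however, a genuine gap in your contraction step. First, the inequality $1-f_\mu(1-t)\ge t$ has the wrong sign: convexity of $f_\mu$ with $f_\mu(1)=1$ and $f_\mu'(1)\le 1$ gives $f_\mu(1-t)\ge 1-t$, i.e.\ $1-f_\mu(1-t)\le t$. More seriously, with $\phi(k)=1-(1-t)^k$ the one-step bound $\E{\phi(Z_{n+1})\mid Z_n=k}=1-f_\mu(1-t)^k\le \rho\,\phi(k)$ is governed by $f_\mu(1-t)-(1-t)$ \emph{from below}: to obtain $\rho\le 1-c\,w^{-(\alpha-1)}$ at the scale $t\asymp 1/w$ you would need $f_\mu(1-t)-(1-t)\gtrsim t^\alpha$. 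The hypothesis $\mu([i,\infty))\le Mi^{-\alpha}$ supplies only the \emph{opposite} inequality $f_\mu(1-t)-(1-t)\le C_\alpha M t^\alpha/(\alpha-1)$, so it does not yield the contraction rate you claim. Concretely, take $\mu(0)=\mu(2)=1/2$: the tail hypothesis holds with $\alpha=3/2$ and $M=\sqrt2$, but $f_\mu(1-t)-(1-t)=t^2/2$, so the best contraction your Lyapunov function delivers is $\rho=1-O(1/w)$, not $1-O(w^{-1/2})$. The indicator $\I{Z_{n+1}\le w}$ does not rescue this, since making it bite again requires a lower tail bound on $\mu$. A generating-function argument of this type can plausibly be pushed through under a two-sided (domain-of-attraction) assumption, but not under the one-sided tail hypothesis of Theorem~\ref{thm:stable}.
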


To understand the requirement that $\alpha \in (1,2]$ in Theorem~\ref{thm:stable}, note that if $\mu([i,\infty)) \ge c i^{-\alpha}$ for some $\alpha \le 1$ then $\sum_i i \mu([i,\infty)) = \infty$ so $\mu$ is not subcritical. On the other hand, if $\mu$ is in the domain of attraction of an $\alpha$-stable distribution (with $\alpha \in (1,2]$) then both bounds have optimal dependence on $\alpha$; see \cite{MR1989629,kortchemski17,duquesne17}. 
On the other hand, if $\mu([i,\infty)) \le M i^{-\alpha}$ for some $\alpha > 2$ then $\mu$ is again in the domain of attraction of the $2$-stable (Gaussian) distribution, in which case  Theorems~\ref{thm:general} and~\ref{thm:fixed_var} apply. 

Theorem~\ref{thm:general} is tight in general; we explain why shortly. The next theorem shows that when $\mu$ has infinite variance, Theorem~\ref{thm:general} is not tight for large trees. 
\begin{thm}\label{thm:inf_var}
Fix a critical or subcritical probability measure $\mu$ with support $\N$, and let $T\sim \mathrm{GW}(\mu)$. If $\sum_i i^2 \mu(i) = \infty$ 
then for any $\eps > 0$ there exists $n_0$ depending on $\eps$ and $\mu$ such that for all 
$x > 0$ and all $n \ge x^2 n_0$, 
\[
\p{\h(T) \ge x|T|^{1/2}, \sigma \ge n} \le \frac{x}{n^{1/2}} e^{-x^2/\eps}\, \, \, \mbox{and}\,\, 
\p{\h(T) \ge x\w(T), \sigma \ge s} \le \frac{x}{n^{1/2}} e^{-x/\eps}\, .
\]
\end{thm}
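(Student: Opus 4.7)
The plan is to reduce Theorem~\ref{thm:inf_var} to Theorem~\ref{thm:fixed_var} via truncation. Because $\sum_i i^2\mu(i)=\infty$, for any $\eps>0$ we can truncate the offspring law at a threshold $K=K(\mu,\eps)$ chosen large enough that the variance of $\mu_K$---the law of $X\wedge K$ when $X\sim\mu$---exceeds $C/\eps$, where $C$ is the absolute constant from Theorem~\ref{thm:fixed_var}. The conditioning $|T|\ge n$ will be used to absorb the cost of the coupling between $T$ and its truncated counterpart, producing the polynomial prefactor.

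Concretely, I would couple $T\sim\GW(\mu)$ and $T_K\sim\GW(\mu_K)$ through a common family of iid offspring counts $(X_v)$ indexed by the nodes of a universal infinite tree: $T$ uses $X_v$ and $T_K$ uses $X_v\wedge K$. This realizes $T_K$ as a subtree of $T$ with $|T_K|\le|T|$, and every node of $T_K$ sits at the same height in $T_K$ as it does in $T$. Since $\mu_K$ is (sub)critical with finite support $\{0,\ldots,K\}$, Theorem~\ref{thm:fixed_var} applies to $T_K$ and gives sub-Gaussian tails for $\h(T_K)/|T_K|^{1/2}$ with exponent $v_K\ge C'/\eps$.

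The core step is to split the event $\{\h(T)\ge x|T|^{1/2},\,|T|\ge n\}$ according to whether some height-realizing leaf of $T$ has all of its ancestors \emph{light} (i.e.\ $X_v\le K$) or whether every height-realizing path contains a \emph{heavy} ancestor. In the light case, $\h(T)=\h(T_K)$ and the claim follows directly from Theorem~\ref{thm:fixed_var} applied to $T_K$, giving the sub-Gaussian factor $e^{-v_K x^2}\le e^{-x^2/\eps}$. In the heavy case, I would decompose at the first heavy ancestor along a height-realizing spine, observe that the subtree hanging below it has law $\GW(\mu)$ after appropriate shifting, and iterate. The prefactor $x/n^{1/2}$ should then arise from combining the tail estimate $\p{|T_K|\ge m}\lesssim m^{-1/2}$ (valid for critical, finite-variance $\mu_K$ via Kemperman's formula) with a summation over possible positions of the first heavy ancestor along the spine. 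The $\w(T)$ version of the inequality follows by the same scheme using the $\w$-bound of Theorem~\ref{thm:fixed_var} in place of the $|T|^{1/2}$ bound.

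The main obstacle, as I see it, will be controlling the contributions of the heavy ancestors so that the multiplicative factors introduced by the iteration do not accumulate and spoil the Gaussian exponent. I expect this can be handled through a spinal/size-biased decomposition together with the observation that, under $|T|\ge n$ with $n\ge x^2 n_0$, heavy ancestors along any single root-to-leaf path are rare (essentially $O(1)$ in expectation, since each node is light with probability $\mu([0,K])$ close to $1$), so that their aggregate contribution to $\h(T)$ is dominated by a single light subtree of size $\Theta(n)$ and can be absorbed into $e^{-x^2/\eps}$ by choosing $K$ sufficiently large relative to $\eps$.
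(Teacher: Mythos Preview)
Your approach is genuinely different from the paper's. The paper never truncates $\mu$ nor reduces to Theorem~\ref{thm:fixed_var}; it works entirely with the associated random walk via Proposition~\ref{prop:ht_bd}, bounding $\h(T)\le 3H(\sigma)=3\sum_{i<\sigma}1/S_i$, and then uses a multi-scale decomposition of the walk. The infinite variance enters through the concentration function: since $\E{X^2}=\infty$, Kesten's bound (Theorem~\ref{thm:kesten}) forces the walk to exit a dyadic scale $[2^{\ell-1},2^{\ell+2})$ in time $o(4^\ell)$ rather than $\Theta(4^\ell)$, and this sharpening propagates through Corollary~\ref{cor:genericbound} to give the arbitrary $\eps$ in the exponent (Proposition~\ref{prop:inf_var_generic} and Corollary~\ref{cor:inf_var_generic}). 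The width bound is then deduced from the size bound via $\h(T)\w(T)\ge|T|$.

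Your truncation idea is natural, but as written it has real gaps. First, $\mu_K$ (the law of $X\wedge K$) is \emph{strictly subcritical} whenever $\mu$ is critical with mass above $K$, so the tail $\p{|T_K|\ge m}\asymp m^{-1/2}$ you invoke is false; for subcritical $\mu_K$ this probability decays exponentially, and the conditional clause of Theorem~\ref{thm:fixed_var} (which requires criticality) is unavailable. Second, even in your ``light'' case the coupling only gives $\h(T)=\h(T_K)$ and $|T_K|\le|T|$; the event $|T|\ge n$ does not force $|T_K|\ge n$, since heavy nodes \emph{off} the spine can make $|T|\gg|T_K|$. The best lower bound you have is $|T_K|\ge\h(T_K)\ge xn^{1/2}$, and feeding that into a joint bound for $T_K$ yields a prefactor of order $n^{-1/4}$, not $n^{-1/2}$. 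Third, the ``heavy'' case---which you yourself flag as the main obstacle---is not an argument but a hope: iterating at the first heavy ancestor gives subtrees that are again $\GW(\mu)$, but you have no control on how the conditioning $|T|\ge n$ distributes among the pieces, and the spinal heuristic you sketch does not supply one. Producing the specific joint form $(x/n^{1/2})e^{-x^2/\eps}$ seems to genuinely require the random-walk occupation-time analysis the paper carries out.
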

The following example limits the degree to which 
Theorem~\ref{thm:inf_var} can be strengthened. Suppose that $\mu(0)=p >0$ and fix $k > 0$ with $\mu(k)=q > 0$. Let $E_h=E_h(k)$ be the event that $\h(T)=h$, that $T$ contains exactly $h$ nodes with $k$ children and that all other nodes are leaves. 
$\p{E_h} \ge q^h p^{hk} = e^{-ch}$, where $c=c(\mu)>0$. 
When $E_h$ occurs we have $|T|= kh+1$ and $\h(T)= |T| \cdot h/(kh+1)$, so with $x = h/(kh+1)^{1/2}$, we obtain
\[
\p{\h(T) \ge x |T|^{1/2},\sigma \ge kh+1} \ge \p{E} \ge 
e^{-ch} \ge e^{-c x^2}> k e^{-ckx^2}\, .
\]
This implies that the dependence of $n_0$ on $\eps$ in Theorem~\ref{thm:inf_var} can not be removed, and that the quadratic relation between the allowed values of $n$ and $x$ is essentially tight.

If $\mu(1)=q>0$ then the same example shows that the bounds of Theorem~\ref{thm:general} are tight. In this case $E_h=E_h(1)$ is the event that $T$ is a path of length $h$, and with $x=(h(1-q))^{1/2}$, 
we have 
\[
\p{\h(T) \ge x \left(\frac{|T|}{1-q}\right)^{1/2}}
\ge 
\p{E_h} = pq^h = pe^{-h\log(1/q)} \ge pe^{-x^2/q}. 
\]

The remainder of the introduction is structured as follows. Section~\ref{sec:prelim} provides a few basic definitions, and states a well-known identity in law between the breadth-first queue process of a Galton-Watson tree and a corresponding random walk stopped on its first visit to zero. In Section~\ref{sec:proof} we introduce a key bound, used in the proofs of all the above theorems, which shows that the height of a tree is bounded, up to a scaling factor, by the ``inverse harmonic average queue length''. We then provide a brief overview of the overall proof strategy and of the structure of the remainder of the paper. 

\subsection{Preliminaries}\label{sec:prelim}
For a rooted tree $T$, write $r(T)$ for the root of $T$. For $v \in T$, write $c_T(v)$ for the number of children of $v$ in $T$, and write $p(v)=p_T(v)$ for the parent of $v$ in $T$, with the convention that $p(r(T))=r(T)$. Also, write $|v|$ for the distance from $v$ to $r(T)$, and recall that $T_k=\{v \in T: |v|=k\}$ is the set of nodes of $T$ at depth $k$.

The Ulam-Harris tree is the infinite tree $\cU$ with root $\emptyset$ and non-root nodes indexed by finite sequences of positive integers. A node $v=v_1\dots v_k$ at depth $k$ has parent $v_1\dots v_{k-1}$ and children $\{vj,j \ge 1\}=\{v_1\dots v_kj,j \ge 1\}$, where $v_1\dots v_{k-1}=\emptyset$ by convention if $k=1$. For nodes $v=v_1 \dots v_k$ and $w=w_1\dots w_l$ of $\cU$, write $v \prec_\cU w$ if either $k < l$, or $k=l$ and $v$ precedes $w$ lexicographically. We refer to $\prec_\cU$ as the breadth-first ordering of the nodes of $\cU$. 

Any countable rooted plane tree $T$ may be viewed as a subtree of $\cU$ as follows. First identify the root of $T$ with the root $\emptyset$ of $\cU$. Next, recursively, if $x \in T$ is identified with $v=v_1\dots v_k \in \cU$ then identify the $j$th child of $x$ with $vj$, for each $j \le c_T(u)$. With this identification, the ordering $\prec_\cU$ induces a total order $\prec_T$ of the nodes of $T$, which we call the breadth-first ordering of the nodes of $T$. If $T$ is locally finite then we enumerate the nodes of $T$ in breadth-first order as $v(T)=\{v_{i+1}(T),0 \le i < |T|\}$; writing the indices this way allows us to treat the cases $|T| =\infty$ and $|T|<\infty$ simultaneously. 

For $v \in v(T)$, if $v=v_i(T)$ then let 
$S(v) = S_T(v) = 1 + \sum_{j < i} (c_T(v_j(T))-1)$. 
The quantity $S(v)$ is the {\em breadth-first queue length} just before $v$ is explored in a lexicographic breadth-first search of $T$. 
It is not hard to see that for all $v \in T$, 
\[
S(v) = \#\{w \in T: p(w) \preceq_T v \preceq_T w\}\, ;
\]
recall that $p(r(T))=r(T)$ by convention. 
\begin{prop}\label{bfq_equiv}
Let $\mu$ be a probability measure on $\N$, 
and define a probability measure $\nu$ on $\{-1\} \cup \N$ by taking $\nu(i)=\mu(i+1)$ for $i \in \{-1\} \cup \N$. 
Let $(S_i,i \ge 0)$ be a random walk with initial position $S_0=1$ and jump distribution $\nu$, and let $\sigma = \inf\{n: S_n=0\}$. If $T \sim \gw(\mu)$ then 
 $(S(v_{i+1}),0 \le i < |T|)$ and $(S_i,0 \le i < \sigma)$ are identically distributed. 
\end{prop}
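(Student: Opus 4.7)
The plan is to prove the identity by exhibiting both processes on a common probability space and then invoking the Galton-Watson branching property in its strongest form: each offspring count is an independent, $\mu$-distributed variable that is only "consumed" at the moment it is needed.

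Concretely, I would start from the standard construction of $T$ on the Ulam-Harris tree: let $(\xi_u)_{u \in \cU}$ be i.i.d.\ $\mu$-distributed, and declare $u \in T$ if, along the ancestral line from $\emptyset$ to $u = u_1\dots u_k$, the child indices satisfy $u_j \le \xi_{u_1 \dots u_{j-1}}$ for all $j$. Then $c_T(u) = \xi_u$ for each $u \in T$. Under this construction, the breadth-first enumeration $v_1(T), v_2(T), \dots$ is a deterministic function of the family $(\xi_u)_{u \in \cU}$, and critically, whether $u$ equals $v_i(T)$ depends only on the variables $\xi_w$ for $w \prec_\cU u$. This is the key structural observation.

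Next I would argue by induction on $i$. The base case is immediate: $S(v_1) = 1 = S_0$. For the inductive step, the definition gives the recursion
\[
S(v_{i+1}) = S(v_i) + (c_T(v_i) - 1),
\]
and the event $\{i+1 \le |T|\}$ coincides with $\{S(v_i) > 0\}$ (the queue is nonempty exactly when there is another vertex to explore). Conditional on the sigma-algebra $\cF_i$ generated by $(v_1(T), \dots, v_i(T))$ and $(c_T(v_1), \dots, c_T(v_{i-1}))$, the identity $v_i(T) = u$ holds on an event determined by $(\xi_w : w \prec_\cU u)$, so $\xi_{v_i(T)} = c_T(v_i)$ is independent of $\cF_i$ and has law $\mu$. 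Setting $X_i := c_T(v_i) - 1$, the increments $(X_i)_{i \ge 1}$ are therefore i.i.d.\ with law $\nu$, and
\[
S(v_{i+1}) = 1 + \sum_{j=1}^{i} X_j
\]
is exactly a random walk started at $1$ with jump distribution $\nu$, run up to the first time it hits $0$. Since $|T|$ coincides with this first hitting time, this finishes the proof.

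The main (and essentially the only) obstacle is rigorously justifying the measurability claim that $\xi_{v_i(T)}$ is $\cF_i$-independent. All else is definitional. The delicate point is that $v_i(T)$ is itself random, so one cannot naively cite "independence of coordinates"; one must observe that the index $v_i(T)$ is determined by variables strictly preceding it in $\prec_\cU$, so conditioning on $\cF_i$ fixes which $\xi_u$ plays the role of $c_T(v_i)$ without constraining its value. Once that bookkeeping is done cleanly, the identity in law follows immediately.
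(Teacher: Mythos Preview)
The paper does not actually prove this proposition; it is stated in Section~\ref{sec:prelim} as a standard fact and used freely thereafter. Your proposal supplies a correct standard argument: build $T$ from an i.i.d.\ family $(\xi_u)_{u\in\cU}$, observe that the breadth-first label $v_i(T)$ is determined by the variables $\xi_w$ with $w\prec_\cU v_i(T)$, and conclude that the increments $c_T(v_i)-1$ are i.i.d.\ with law $\nu$. You have also correctly isolated the only genuine subtlety, namely that the random index $v_i(T)$ is measurable with respect to strictly earlier coordinates, so $\xi_{v_i(T)}$ remains a fresh $\mu$-sample given the past.

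One small bookkeeping slip: the event $\{i+1\le |T|\}$ is not quite ``$S(v_i)>0$'' (that inequality holds for every $i\le |T|$, since the queue is positive whenever $v_i$ exists). The correct identification is that $v_{i+1}$ exists iff the queue after processing $v_i$ is positive, i.e.\ $S(v_i)+c_T(v_i)-1>0$, which under the correspondence $S(v_{i+1})\leftrightarrow S_i$ translates to $i<\sigma$. This is only an indexing wrinkle and does not affect the substance of your argument.
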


For a random walk $S=(S_n,n \ge 0)$ we write $\psub{u}{\cdot}$ for the probability measure under which the random walk has initial position $S_0=u$. Our default notation for the steps of $S$ is $(X_i,i \ge 1)$, so that $S_n-S_{n-1}=X_n$ for all $n \ge 1$.

\subsection{Proof overview}\label{sec:proof}
The following chain of identities, 
\[
\h(T) = \sum_{k=1}^{\h(T)} 1 = \sum_{k=1}^{\h(T)} \sum_{v \in T_k} \frac{1}{|T_k|}\, ,
\]
may seem too trivial to be useful, and it is. However, it contains an idea which, after a little massaging, becomes 
quite powerful, and indeed undergirds the rest of the proof. The following lemma and proposition show that replacing 
$1/|T_k|$ by $3/S(v)$ within the final sum yields an upper bound on $\h(T)$, and allows us to bound the height by studying the 
breadth-first queue process. 
\begin{lem}\label{lem:two_sums}
For any sequence of positive integers $(n_k,0 \le k \le h+1)$ with $n_0=n_{h+1}=1$, it holds that 
\begin{equation}\label{eq:two_sums}
\sum_{k\in [0,h]: n_k \le n_{k+1}} \frac{n_k}{n_k+n_{k+1}} + \sum_{k\in [0,h]:n_k > n_{k+1}} \log\pran{\frac{n_k+n_{k+1}}{n_{k+1}}} \ge \frac{h}{3}\, .
\end{equation}
\end{lem}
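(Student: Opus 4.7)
The plan is to find per-term lower bounds of a very specific shape: a constant $1/3$ plus or minus $\tfrac13$ times the $\log$ of the step ratio, so that when summed over $k\in[0,h]$ the $\log$-corrections assemble into the telescoping sum $\sum_{k=0}^h\log(n_{k+1}/n_k)=\log(n_{h+1}/n_0)=0$. The boundary values $n_0=n_{h+1}=1$ do all the work through this telescope.

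The two per-term inequalities I would prove are
\[
\frac{a}{a+b}\;\ge\;\tfrac{1}{3}\bigl(1-\log\tfrac{b}{a}\bigr)\qquad\text{for positive reals }a\le b,
\]
\[
\log\tfrac{a+b}{b}\;\ge\;\tfrac{1}{3}\bigl(1+\log\tfrac{a}{b}\bigr)\qquad\text{for positive reals }a>b.
\]
Setting $t=b/a\ge 1$ reduces the first to $\tfrac{3}{1+t}+\log t\ge 1$, which at $t=1$ equals $3/2$ and whose derivative $\tfrac{1-t+t^2}{t(1+t)^2}$ is strictly positive for all $t>0$. Setting $t=a/b>1$ reduces the second to $3\log(1+t)-\log t\ge 1$, which at $t=1$ equals $3\log 2>1$ and whose derivative $\tfrac{2t-1}{t(1+t)}$ is positive on $[1,\infty)$. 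So both one-variable inequalities hold on $[1,\infty)$.

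Applying these with $(a,b)=(n_k,n_{k+1})$ and summing over $k\in[0,h]$ --- the flat case $n_k=n_{k+1}$ falling under the first bound with its $\log$ vanishing --- the ``$\tfrac13$'' contributions give $(h+1)/3$, and the $\log$ contributions combine into
\[
-\tfrac{1}{3}\sum_{k\colon n_k\le n_{k+1}}\log\tfrac{n_{k+1}}{n_k}-\tfrac{1}{3}\sum_{k\colon n_k>n_{k+1}}\log\tfrac{n_{k+1}}{n_k}=-\tfrac{1}{3}\log\tfrac{n_{h+1}}{n_0}=0,
\]
so the total is at least $(h+1)/3\ge h/3$.

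The main obstacle is guessing the right shape of the per-step bound: the additive constant must equal the target $1/3$, and the $\log$-coefficient must match on both sides of the case distinction so that the corrections telescope exactly. The ansatz $\tfrac13\pm\tfrac13\log(\text{ratio})$ happens to work in both directions, as the two quick derivative checks above confirm; once this is noticed, the rest is bookkeeping. Examples like $n_k=c^k$ ascending to a peak before dropping back to $1$ show that up-steps can contribute arbitrarily little individually, so one really does need the log-correction machinery, and at the same time they show the choice of constant $1/3$ is essentially sharp for the method.
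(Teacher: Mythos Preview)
Your proof is correct and takes a genuinely different route from the paper's. The paper partitions the index set into three pieces --- down-steps ($n_{k+1}<n_k$), small up-steps ($n_{k+1}\in[n_k,2n_k)$), and large up-steps ($n_{k+1}\ge 2n_k$) --- and then bounds the left-hand side from below by a weighted count of each type, using the telescoping product $\prod_k n_k/n_{k+1}=1$ only to transfer information from the down-step sum to the large-up-step count, together with the numerical fact $\log 2>2/3$. Your approach instead proves a single uniform per-term lower bound $\tfrac13-\tfrac13\log(n_{k+1}/n_k)$ valid in both cases, so that the telescoping sum $\sum_k\log(n_{k+1}/n_k)=0$ cancels exactly. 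This is cleaner: it avoids the three-way case split, avoids the convex-combination juggling with $p=1/2$, and yields the slightly sharper $(h+1)/3$. The paper's argument, on the other hand, is perhaps easier to discover without an ansatz, since it only requires crude constant lower bounds on each term type rather than the precise log-corrected inequalities you had to guess and verify.
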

\begin{proof}
In what follows, $k$ is always assumed to be an integer from the interval $[0,h]$; 
so, for example, $\{k: n_k > n_{k+1}\}$ is shorthand for the set $\{k \in \{0,1,\ldots,h\}: n_k > n_{k+1}\}$. 

First note the easy bound 
\[
\sum_{k:n_k > n_{k+1}} \log\pran{\frac{n_k+n_{k+1}}{n_{k+1}}} \ge (\log 2) \cdot \#\{k: n_k > n_{k+1}\}. 
\]
A slightly more complicated lower bound on the same sum follows from the identity
$\prod_{0 \le k < h} \frac{n_k}{n_{k+1}} = \frac{n_0}{n_{h+1}}=1$, which implies that 
\begin{align*}
\sum_{k:n_k > n_{k+1}} \log\pran{\frac{n_k+n_{k+1}}{n_{k+1}}} 
	& > \sum_{k:n_k > n_{k+1}} \log\pran{\frac{n_k}{n_{k+1}}} \\
	& = \log\pran{\prod_{k:n_k > n_{k+1}}\frac{n_k}{n_{k+1}}}\\
	& = \log\pran{\prod_{k:n_{k+1} \ge n_k}\frac{n_{k+1}}{n_k}} \\
	& \ge \log\pran{\prod_{k:n_{k+1} \ge 2n_k}\frac{n_{k+1}}{n_k}} \\
	& \ge (\log 2)\cdot \#\{k: n_{k+1} \ge 2n_k\}\, .
\end{align*}
Finally, 
\[
\sum_{k: n_{k+1} \in [n_k,2n_k)} \frac{n_k}{n_k+n_{k+1}} \ge \frac{1}{3} \cdot \#\{k: n_{k+1} \in [n_k,2n_k)\}. 
\]
It follows that for any $p \in [0,1]$, the left-hand side of (\ref{eq:two_sums}) is at least 
\[
\frac{1}{3} \cdot \#\{k: n_{k+1} \in [n_k,2n_k)\} + (p\cdot \#\{k: n_{k+1} < n_k\} + 
(1-p)\cdot \#\{k: n_{k+1} \ge 2n_k\})\cdot \log 2. 
\]
Since $\log 2 > 2/3$, setting $p=1/2$ wraps things up. 
\end{proof}
\begin{prop}\label{prop:ht_bd}
For any tree $T$, it holds that $\h(T) \le 3\cdot \sum_{v \in T} \frac{1}{S(v)}$. 
\end{prop}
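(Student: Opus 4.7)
The plan is to apply Lemma~\ref{lem:two_sums} to the level sizes of $T$ and then bound $\sum_{v\in T} 1/S(v)$ from below by the left-hand side of~(\ref{eq:two_sums}) via a per-level argument whose boundary terms telescope. Write $h=\h(T)$, set $n_k=|T_k|$ for $k=0,\ldots,h$, and put $n_{h+1}=1$; since $n_0=|T_0|=1$, the hypotheses of the lemma are met, and we obtain $\sum_{k=0}^h \phi(n_k,n_{k+1}) \ge h/3$, where $\phi(a,b)$ denotes the summand in~(\ref{eq:two_sums}) (so $\phi(a,b)=a/(a+b)$ if $a\le b$ and $\phi(a,b)=\log((a+b)/b)$ if $a>b$).

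The heart of the argument is the per-level inequality
\[
L_k - \frac{1}{n_k} + \frac{1}{n_{k+1}} \;\ge\; \phi(n_k, n_{k+1}), \qquad k=0,1,\ldots,h,
\]
where $L_k := \sum_{v\in T_k} 1/S(v)$. To prove it, I would enumerate the level-$k$ nodes as $u_1,\ldots,u_{n_k}$ in breadth-first order and observe, by induction on $j$, that $S(u_j) = n_k - j + 1 + t_{j-1}$, where $t_{j-1} = c_T(u_1)+\cdots+c_T(u_{j-1})$ is the total number of children of the first $j-1$ level-$k$ nodes, so that $0 = t_0 \le \cdots \le t_{n_k} = n_{k+1}$. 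In particular $S(u_1) = n_k$ and $S(u_j) \le n_k + n_{k+1} - j + 1$ for $j \ge 2$, so
\[
L_k - \frac{1}{n_k} \;=\; \sum_{j=2}^{n_k} \frac{1}{S(u_j)} \;\ge\; \sum_{j=2}^{n_k} \frac{1}{n_k+n_{k+1}-j+1} \;=\; \sum_{l=n_{k+1}+1}^{n_k+n_{k+1}-1} \frac{1}{l}.
\]
Adding $1/n_{k+1}$ and invoking the integral comparison $\sum_{l=A}^B 1/l \ge \log((B+1)/A)$ gives $L_k - 1/n_k + 1/n_{k+1} \ge \log((n_k+n_{k+1})/n_{k+1})$, which equals $\phi(n_k,n_{k+1})$ in case 2 and exceeds $\phi(n_k,n_{k+1}) = n_k/(n_k+n_{k+1})$ in case 1 via the elementary inequality $\log(1+x) \ge x/(1+x)$.

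Summing the per-level inequality over $k=0,\ldots,h$ and using $n_0 = n_{h+1} = 1$, the boundary terms telescope:
\[
\sum_{k=0}^{h}\left(\frac{1}{n_{k+1}} - \frac{1}{n_k}\right) \;=\; \frac{1}{n_{h+1}} - \frac{1}{n_0} \;=\; 0,
\]
so $\sum_{v\in T} 1/S(v) = \sum_k L_k \ge \sum_k \phi(n_k,n_{k+1}) \ge h/3$, which rearranges to the proposition.

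The main obstacle is recognizing the correct per-level statement. The naive claim $L_k \ge \phi(n_k,n_{k+1})$ actually fails in case 2; for instance, taking $n_k=3$, $n_{k+1}=1$ with all three children born to the first level-$k$ node yields $L_k = 7/6 < \log 4 = \phi(3,1)$. The shift by $-1/n_k + 1/n_{k+1}$ is exactly what upgrades the slightly-loose bound $\sum_{l=A+1}^{B} 1/l \ge \log((B+1)/(A+1))$ to the tight $\sum_{l=A}^{B} 1/l \ge \log((B+1)/A)$, and has the crucial property of telescoping to zero globally.
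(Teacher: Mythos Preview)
Your proof is correct and follows essentially the same route as the paper. The paper achieves the per-level inequality by first ``shifting time by one'': setting $M(v_i)=S(v_{i+1})$ and $M(v_{|T|})=1$, it observes that $\sum_v 1/S(v)=\sum_v 1/M(v)$ and then proves $\sum_{v\in T_k}1/M(v)\ge \phi(n_k,n_{k+1})$ directly. Your telescoping correction is exactly this shift in disguise: since $S(u_1)=n_k$ and $M(u_{n_k})=n_{k+1}$, one has $\sum_{v\in T_k}1/M(v)=L_k-1/n_k+1/n_{k+1}$, so your corrected per-level quantity and the paper's are literally equal. (A minor wording slip: in your counterexample with $n_k=3$, $n_{k+1}=1$, you presumably mean the single level-$(k{+}1)$ child is born to $u_1$; the value $L_k=7/6$ confirms this reading.)
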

\begin{proof}
``Shifting time by one'' makes the numbers work out a little more smoothly. For $1 \le i < |T|$ let $M(v_i) = S(v_{i+1})$; 
this is the length of breadth-first queue just {\em after} $v_i$ is explored. Since $S(v_1)=1$, setting $M(v_{|T|})=1$ yields 
\[
\sum_{v \in T} \frac{1}{S(v)} = \sum_{v \in T} \frac{1}{M(v)}.
\]

For $0 \le k \le \h(T)$ let $n_k = |T_k|$, and let $n_{\h(T)+1}=1$. 
Now fix an integer $k \in [0, \h(T)]$. In a breadth-first exploration of $T$, for all $v \in |T_k|$, 
just after $v$ is explored the queue consists of nodes from $T_k \cup T_{k+1}$. 
This implies that 
\[
\sum_{v \in T_k} \frac{1}{|M(v)|} \ge |T_k| \cdot \frac{1}{|T_k\cup T_{k+1}|} = \frac{n_k}{n_k+n_{k+1}}\, .
\]

Next, with $m = \sum_{0 \le j < k} |T_j|$, the nodes of $T_k$ in breadth-first order are 
$v_{m+1},\ldots,v_{m+n_k}$. Just after $v_{m+i}$ is explored the elements of queue form a subset of
$\{v_{m+i+1},\ldots,v_{m+n_k}\} \cup T_{k+1}$, so 
\[
\sum_{v \in T_k} \frac{1}{|M(v)|} \ge \sum_{i=0}^{n_k} \frac{1}{n_k+n_{k+1}-i} \ge \log\pran{\frac{n_k+n_{k+1}}{n_{k+1}}}\, .
\]
Together, these inequalities yield 
\[
\sum_{v \in T} \frac{1}{S(v)} = \sum_{v \in T} \frac{1}{|M(v)|} \ge \sum_{k=0}^{\h(T)} \max\pran{\frac{n_k}{n_k+n_{k+1}},\log\pran{\frac{n_k+n_{k+1}}{n_{k+1}}}}\, ,
\]
and the proposition follows from Lemma~\ref{lem:two_sums}.
\end{proof}

Note that the width is also easily bounded using the breadth-first queue. First, as noted in the preceding proof, if $v \in T_k$ then the nodes counted by $S(v)$ form a subset of $T_k \cup T_{k+1}$. Next, if $v_k$ is the {\em first} node of $T_k$ with respect to the breadth-first order $\prec_{T}$, then $S(v_k) = |T_k|$. Together, these facts immediately imply that 
\begin{equation}\label{walk_width}
\frac{1}{2}\max_{v\in T} S(v) \le \w(T) \le \max_{v\in T} S(v)\, .
\end{equation} 

When $T\sim \gw(\mu)$, we have $(S(v_{i+1}),0\le i < |T|) \sim (S_i,0 \le i < \sigma)$, where 
$(S_i,i \ge 0)$ is a simple random walk with $S_0=1$ whose steps $(X_i,i \ge 1)$ satisfy $X_i+1\sim \mu$, 
and where $\sigma= \inf\{t: S_t=0\}$. 

For $0 \le t \le \sigma$ we write 
\[
H(t) = \sum_{i < t} \frac{1}{S_i}. 
\] 
Proposition~\ref{prop:ht_bd} and the bounds of (\ref{walk_width}) reduce 
our task to that of understanding the relations between $\sigma$ and the quantities $H(\sigma)$ and $\max_{0 \le i < \sigma} S_i$. 

At the heart of our argument is a multi-scale decomposition of the random walk path $(S_i,0 \le i \le \sigma)$. 
When $S_t$ takes values at a given scale -- when $S_t$ has order about $x$, say -- then for as long as the random walk continues to have order about $x$, the partial sums $\sum_{0 \le i \le t} \frac{1}{S_i}$ increase at rate about $1/x$. In particular, it takes $\Theta(x)$ steps for the sequence of partial sums to 
increase by $1$. 

To convert this into a good bound requires controlling how much time the random walk spends at each scale. 
This in turn impels us to track the times  the random walk ``changes scale'' and how many times it may revisit a given scale before 
visiting the origin, and to produce bounds on how long the random walk might spend at a given scale. For the latter, we make crucial use of universal bounds on the {\em concentration} function of a sum of IID random variables. The bounds may be sharpened if further information is provided about the jump distribution, and such sharpenings seem to generally yield optimal bounds for the heights of the corresponding Galton-Watson trees. 

In Section~\ref{sec:exit}, we introduce a general bound on the concentration function of a random walk, and deduce several corollaries which hold under specific hypotheses about the step distribution. 
In Section~\ref{sec:scales}, we formally define the multi-scale decomposition which is at the core of our proofs, and prove results controlling the amount of time spent by a random walk at a fixed scale. In Section~\ref{key_results}, these results are applied to prove probabilistic bounds relating $H(\sigma)$ and $\sigma$ under various assumptions on the step size; from these, the theorems stated in the introduction are straightforwardly derived, in Section~\ref{sec:proofs}. Finally, Section~\ref{sec:conc} contains some questions and references to the literature.

\section{Exit times from intervals and the concentration function}\label{sec:exit}
Following \citet{MR0301786}, for a random variable $Z$, and a constant $L > 0$, write 
\[
Q(Z,L) = \sup_{x \in \R}\p{Z \in [x,x+L)}. 
\]
This function was introduced by Paul L\'evy \citep[Section 16]{levy}, who called it the concentration function of $Z$. 
We will use the following bounds. 
\begin{thm}[\cite{MR0301786}]\label{thm:kesten}
There exists an absolute constant $C$ such that the following holds. 
Let $(X_i,i\ge 1)$ be iid copies of a random variable $X$, fix $n \in \N$ and let $S=\sum_{i=1}^n X_i$. Then for all $L >0$, 
\[
Q(S,L) \le \frac{CL}{(n\E{(X_1-X_2)^2 \I{|X_1-X_2| \le L}})^{1/2}}\, ,
\]
and 
\[
Q(S,L) \le \frac{C \cdot L}{n^{1/2}(1-Q(X,1/2))^{1/2}}\, .
\]

\end{thm}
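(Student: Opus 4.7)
The plan is to prove both inequalities via the Fourier-analytic approach of Esseen, as refined by Kesten in \cite{MR0301786}. The starting point is Esseen's smoothing inequality: for any random variable $Z$ and any $L > 0$,
\[
Q(Z, L) \le C L \int_{-1/L}^{1/L} |\varphi_Z(t)|\, dt,
\]
where $\varphi_Z(t) = \E{e^{itZ}}$ is the characteristic function of $Z$. Since $S = \sum_{i=1}^n X_i$ is a sum of iid copies, $\varphi_S(t) = \varphi_X(t)^n$, and both desired inequalities reduce to pointwise upper bounds on $|\varphi_X(t)|$ for $|t| \le 1/L$.

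For the first inequality, the key is symmetrization: introduce a further independent copy $X_2$ of $X = X_1$ and observe that
\[
|\varphi_X(t)|^2 = \E{e^{it(X_1 - X_2)}} = \E{\cos(t(X_1 - X_2))}.
\]
Using the elementary estimate $1 - \cos u \ge c u^2$ for $|u| \le 1$, together with the observation that $|t(X_1 - X_2)| \le 1$ whenever $|t| \le 1/L$ and $|X_1 - X_2| \le L$, one obtains
\[
|\varphi_X(t)|^2 \le 1 - c t^2 V_L \le \exp(-c t^2 V_L),
\]
where $V_L := \E{(X_1 - X_2)^2 \I{|X_1 - X_2| \le L}}$. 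Thus $|\varphi_X(t)|^n \le e^{-c n t^2 V_L / 2}$, and substituting into Esseen's inequality and evaluating the resulting Gaussian integral yields
\[
Q(S, L) \le C L \int_{\R} e^{-c n t^2 V_L/2}\, dt \le \frac{C' L}{(n V_L)^{1/2}},
\]
which is the first claim.

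For the second inequality, the task is to replace $V_L$ by a quantity involving $1 - Q(X, 1/2)$. I would establish a pointwise bound of the form $|\varphi_X(t)|^2 \le 1 - c(1 - Q(X, 1/2))$ on a range of $t$ of positive Lebesgue measure, and then combine it with the trivial bound $|\varphi_X(t)| \le 1$ outside this range. The pointwise bound is extracted from the identity $1 - |\varphi_X(t)|^2 = 2\E{\sin^2(t(X_1 - X_2)/2)}$, together with the elementary estimate $\p{|X_1 - X_2| < 1/2} \le 2 Q(X, 1/2)$ (since any interval of length $1$ can be covered by two intervals of length $1/2$). Averaging $\sin^2(ty/2)$ over $t$ on a bounded interval kills the oscillation and gives a uniform lower bound of order $1 - Q(X, 1/2)$ on $1 - |\varphi_X(t)|^2$ at some $t$ of order one; continuity then extends this to a neighborhood whose contribution dominates the Esseen integral.

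The main obstacle is the second inequality, which is Kesten's principal refinement of earlier Rogozin-type bounds. The first inequality is essentially a direct calculation once Esseen's smoothing lemma and the symmetrized cosine inequality are in hand. The subtlety in the second is obtaining a pointwise bound on $|\varphi_X(t)|$ with the correct absolute constants uniformly over a fixed neighborhood of the origin, without loss in the regime where $Q(X, 1/2)$ is close to $1$ — where $X$ is essentially supported on a short interval and one needs a reduction back to the first inequality applied at a rescaled length.
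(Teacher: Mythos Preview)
The paper does not prove this theorem; it is quoted from Kesten \cite{MR0301786} and used as a black box, so there is no ``paper's proof'' to compare your proposal against.

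That said, your outline for the first inequality is the standard Esseen--Kesten route and is essentially correct. For the second inequality your sketch is too loose to constitute a proof. The averaging argument you describe yields a lower bound on an \emph{integral} of $1-|\varphi_X(t)|^2$ over an interval, not a pointwise lower bound on a set of controlled measure; and the step ``continuity then extends this to a neighborhood whose contribution dominates the Esseen integral'' hides the entire difficulty, since the size of that neighborhood depends on the distribution of $X$ and you need an absolute constant. Kesten's actual argument (for the Rogozin-type bound $Q(S,L)\le C L \lambda^{-1}(n(1-Q(X,\lambda)))^{-1/2}$ with $L\ge\lambda$) works by bounding $1-|\varphi_X(t)|^2 = \E{1-\cos(t(X_1-X_2))}$ from below uniformly on the \emph{whole} interval $|t|\le 1/\lambda$ in terms of $\p{|X_1-X_2|\ge \lambda}$, using that $\int_{-1/\lambda}^{1/\lambda}(1-\cos(ty))\,dt$ is bounded below by $c/\lambda$ whenever $|y|\ge\lambda$, and then inserting this directly into Esseen's integral rather than passing through a pointwise bound. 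Also note that as stated the second inequality implicitly requires $L\ge 1/2$ (take $\lambda=1/2$ in Kesten's formulation); this is harmless for the paper's applications, where $L=2^m\ge 1$, but your proposal does not address the scaling in $L$ at all.
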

The next two corollaries are straightforward applications of Theorem~\ref{thm:kesten}. 
Recall that for a random walk $(S_n,n \ge 0)$, we write $\mathbb{P}_u$ for the probability measure under which the walk has initial position $u \in \R$. 
\begin{cor}\label{cor:infvar}
Let $(X_i,i \ge 1)$ be iid copies of a random variable $X$, and let $(S_n,n \ge 0)$ be a random walk with steps $(X_i,i \ge 1)$. Suppose that $\E{X^2}=\infty$. For $K \ge 1$, let $\sigma(K) = \inf\{n: S_n \not \in (0,2^K)\}$. Then for any $\eps > 0$, 
\[
\lim_{K \to \infty} \sup_{x \in (0,2^K)} \psub{x}{\sigma(K) \ge \eps \cdot 4^K} =0\, .
\]
\end{cor}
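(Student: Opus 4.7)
The plan is to control the survival probability of the walk inside $(0,2^K)$ via Kesten's concentration inequality (the first bound in Theorem~\ref{thm:kesten}), with $L=2^K$, and then choose $n \asymp \eps\cdot 4^K$ so that the resulting estimate depends only on $\E{(X_1-X_2)^2\I{|X_1-X_2|\le 2^K}}$, which will tend to infinity by the hypothesis $\E{X^2}=\infty$.

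First, I would translate the exit-time bound into a concentration bound. Set $n_K=\lceil \eps\cdot 4^K\rceil$. If $\sigma(K)\ge n_K$ then $S_{n_K-1}\in (0,2^K)$, so uniformly in $x\in(0,2^K)$,
\[
\psub{x}{\sigma(K)\ge n_K}\le \psub{x}{S_{n_K-1}\in(0,2^K)} = \p{{\textstyle\sum_{i=1}^{n_K-1} X_i}\in(-x,2^K-x)} \le Q\!\pran{{\textstyle\sum_{i=1}^{n_K-1}X_i},\,2^K}.
\]
Kesten's first bound from Theorem~\ref{thm:kesten} then gives, for $K$ large enough that $n_K-1\ge \eps\cdot 4^K/2$,
\[
\psub{x}{\sigma(K)\ge n_K}\le \frac{C\cdot 2^K}{\bigl((n_K-1)\,v_K\bigr)^{1/2}}\le \frac{C\sqrt{2}}{\sqrt{\eps\, v_K}},\qquad v_K:=\E{(X_1-X_2)^2\I{|X_1-X_2|\le 2^K}}.
\]
The right-hand side no longer depends on $x$, so it suffices to show $v_K\to\infty$ as $K\to\infty$.

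The remaining step is to deduce $v_K\to\infty$ from $\E{X^2}=\infty$. I would argue: on the event $\{|X_1|\le 2^{K-1},\,|X_2|\le 2^{K-1}\}$ the indicator $\I{|X_1-X_2|\le 2^K}$ holds, so
\[
v_K\ge \E{(X_1-X_2)^2\I{|X_1|\le 2^{K-1}}\I{|X_2|\le 2^{K-1}}} = 2\,\bigl(\E{Y_K^2}\,\p{|X|\le 2^{K-1}} - \E{Y_K}^2\bigr),
\]
where $Y_K:=X\I{|X|\le 2^{K-1}}$. Since $\E{X^2}=\infty$, monotone convergence gives $\E{Y_K^2}\to\infty$, while Cauchy-Schwarz yields $\E{Y_K}^2\le 2^{K-1}\E{|Y_K|}\le 2^{K-1}(1+\E{Y_K^2})$; a little bookkeeping (splitting on whether $\E{|X|}$ is finite or not) then shows the first term dominates and $v_K\to\infty$. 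Plugging this into the display above yields the required uniform convergence.

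The only mildly technical step is the last one: verifying $v_K\to\infty$ from the hypothesis $\E{X^2}=\infty$ without any control on $\E{|X|}$. This is routine rather than deep, but it is where one must be careful, because Kesten's first bound uses the symmetrized truncated second moment rather than a one-sided truncation, and one must rule out the possibility that large positive and negative values of $X$ conspire to keep $X_1-X_2$ small.
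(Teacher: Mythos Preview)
Your overall strategy—reduce to a concentration estimate, apply Kesten's first bound with $L=2^K$, and then show $v_K\to\infty$—matches the paper's proof exactly. The only divergence is in how you argue $v_K\to\infty$, and here your sketch has a real gap. Your displayed inequality $(\E{Y_K})^2\le 2^{K-1}(1+\E{Y_K^2})$ is correct but useless: substituting it into $\E{Y_K^2}\,p_K-(\E{Y_K})^2$ gives a lower bound of $\E{Y_K^2}(p_K-2^{K-1})-2^{K-1}$, which is negative for $K\ge 2$. The case $\E{|X|}<\infty$ is indeed routine (then $\E{Y_K}\to\E{X}$ is bounded while $\E{Y_K^2}\to\infty$), but when $\E{|X|}=\infty$ your ``bookkeeping'' does not go through as written. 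The claim is still true—indeed $\E{Y_K^2}\,p_K-(\E{Y_K})^2=p_K^2\,\mathrm{Var}\!\left(X\,\middle|\,|X|\le 2^{K-1}\right)$, and one can show this conditional variance diverges—but that takes a genuine argument rather than the inequality you wrote down.

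The paper sidesteps this entirely. It simply notes that $\E{X^2}=\infty$ forces $\E{(X_1-X_2)^2}=\infty$: for any constant $a$ one has $(X-a)^2\ge X^2/2-a^2$, so $\E{(X-a)^2}=\infty$; conditioning on $X_2$ then gives $\E{(X_1-X_2)^2}=\infty$. Monotone convergence applied directly to the symmetrized truncation now yields $v_K=\E{(X_1-X_2)^2\I{|X_1-X_2|\le 2^K}}\to\infty$. This is both shorter and avoids the separate-truncation detour; you should replace your step~3 with this argument.
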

\begin{proof}
Since $\E{X_1^2}=\infty$, we also have $\E{(X_1-X_2)^2}=\infty$. Fixing $\eps >0$, we may thus find $L$ such that $\E{(X_1-X_2)^2 \I{|X_1-X_2| \le L}} \ge \eps^{-2}$. It follows by Theorem~\ref{thm:kesten} that for all $K$ with $2^K \ge L$, for any $s \in (0,2^K)$, 
\[
\psub{x}{\sigma(K) \ge \eps \cdot 4^K} 
\le \psub{x}{S_{\lceil\eps 4^K\rceil} \in (0,2^K)} \le 
Q(\lceil \eps 4^K\rceil,2^K) \le \frac{C 2^K}{(\eps 4^K)^{1/2} \cdot \eps^{-1}}=C\eps^{1/2}\, .
\]
The result follows. 
\end{proof}
\begin{cor}\label{cor:disperse}
There exists an absolute constant $C$ such that the following holds. 
Let $(X_i,i \ge 1)$ be iid copies of an integer random variable $X$ with $\p{X \ge -1}= 1$, and let $(S_n,n \ge 0)$ be a random walk with steps $(X_i,i \ge 1)$. Then for all $n \in \N$ and all $m \in \N$, 
\[
Q(S_n,2^m) \le C \cdot \frac{2^m}{(n\p{X=-1} \E{X^2\I{X \in [0,2^m)}})^{1/2}}\, .
\]
\end{cor}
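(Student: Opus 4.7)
\medskip

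The plan is to derive this directly from the first bound of Theorem~\ref{thm:kesten} by producing a clean lower bound on $\E{(X_1-X_2)^2 \I{|X_1-X_2|\le 2^m}}$ that isolates the contribution from the atom at $-1$. Concretely, I would apply Theorem~\ref{thm:kesten} with $L = 2^m$, so it remains to show
\[
\E{(X_1-X_2)^2 \I{|X_1-X_2|\le 2^m}} \ge \p{X = -1}\cdot \E{X^2 \I{X \in [0,2^m)}}.
\]

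To see this, I would restrict attention to the event $A = \{X_2 = -1,\, X_1 \in [0,2^m)\}$. Since $X_1,X_2$ are integer-valued with $X \ge -1$, on $A$ we have $X_1 - X_2 = X_1 + 1 \in [1, 2^m]$, so $|X_1 - X_2| \le 2^m$ holds automatically. Moreover $(X_1 - X_2)^2 = (X_1+1)^2 \ge X_1^2$ on $A$ because $X_1 \ge 0$. Using independence of $X_1, X_2$, this gives
\[
\E{(X_1-X_2)^2 \I{|X_1-X_2|\le 2^m}} \ge \E{X_1^2 \I{X_1 \in [0,2^m)}} \cdot \p{X_2 = -1},
\]
which is exactly the needed inequality. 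Substituting into Theorem~\ref{thm:kesten} yields the corollary with the same absolute constant $C$.

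There is no real obstacle here; the only delicate point is the boundary check $X_1 + 1 \le 2^m$ when $X_1 = 2^m - 1$, which works because the indicator is $X_1 \in [0,2^m)$ (open on the right) and the range of Theorem~\ref{thm:kesten} is $|X_1 - X_2| \le L$ (closed). Using the atom at $-1$ rather than any other value is what lets us convert a truncated second moment of $X$ into a truncated second moment of $X_1 - X_2$ without losing anything beyond the factor $\p{X = -1}$, and hence produces the desired $\p{X=-1}$ factor in the denominator.
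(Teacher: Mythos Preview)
Your proposal is correct and follows essentially the same approach as the paper: apply the first bound of Theorem~\ref{thm:kesten} with $L=2^m$ and lower-bound $\E{(X_1-X_2)^2\I{|X_1-X_2|\le 2^m}}$ by restricting to the event $X_2=-1$. Your execution is in fact cleaner than the paper's, which uses a dyadic decomposition of the range of $X_1$ and loses a factor of $16$, whereas your direct inequality $(X_1+1)^2 \ge X_1^2$ for $X_1\ge 0$ gives the bound with constant~$1$.
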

\begin{proof}
Taking $L=2^m$ for positive integer $m$, we have 
\begin{align*}
\E{(X_1-X_2)^2\I{|X_1-X_2| \le 2^m}} & \ge 2\sum_{0 \le  i < m} 2^{2(i-1)}\p{X_1\in [2^i,2^{i+1}),X_2 \le 2^{i-1}} \\
				& \ge \frac{\p{X_2=-1}}{16} \cdot \sum_{0 \le i < m} 2^{2(i+1)} \p{X_1\in [2^i,2^{i+1})} \\
				& \ge \frac{\p{X=-1}}{16} \E{X^2 \I{X \in [0,2^m)}}, 
\end{align*}
the final inequality since if $X_1=0$ then $X_1^2=0$. 
The result now follows from the first bound of Theorem~\ref{thm:kesten}. 
\[
Q(S_n,2^{m}) \le \frac{16C \cdot 2^m}{n^{1/2} \p{X=-1} \cdot \E{X^2\I{X \in [0,2^m)}}}\, . 
\]
The result follows. 
\end{proof}

\begin{lem}\label{lem:nl_bound}
There exists an absolute constant $C$ such that the following holds. 
Let $(X_i,i \ge 1)$ be iid copies of an integer random variable $X$ with $\p{X \ge -1}= 1$ and with $\e{X} \le 0$, and let $(S_n,n \ge 0)$ be a random walk with steps $(X_i,i \ge 1)$. Fix $K \ge 1$ and let $\sigma(K) = \inf\{n: S_n \not \in (0,2^K)\}$. 
Then 
\[
\sup_{x \in (0,2^K)} 
\psub{x}{\sigma(K) \ge C \cdot \frac{4^K}{1-\p{X=0}}} \le \frac{1}{2}, 
\]
\end{lem}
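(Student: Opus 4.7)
The target inequality asks us to show $\psub{x}{\sigma(K) \geq n} \leq 1/2$ for $n = C\cdot 4^K/(1-q)$, with $q = \p{X=0}$. My plan is to first reduce to the case $q = 0$, and then to prove the reduced statement using a concentration-function bound combined with a case analysis.

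For the reduction, let $\tilde X$ have the law of $X$ conditional on $\{X \neq 0\}$, and let $\tilde S$ be the associated walk with exit time $\tilde\sigma(K)$. The hypotheses $\tilde X \geq -1$, $\E{\tilde X} = \E{X}/(1-q) \leq 0$, and $\p{\tilde X = 0} = 0$ all persist. Writing $N(n) \sim \mathrm{Binomial}(n, 1-q)$ for the number of non-zero $X_i$ among the first $n$, the event $\{\sigma(K) > n\}$ equals $\{\tilde \sigma(K) > N(n)\}$, so splitting on $\{N(n) \leq C_0 4^K\}$ and applying a Chernoff bound to $N(n)$ converts a bound $\psub{x}{\tilde\sigma(K) > C_0 4^K} \leq 1/4$ into $\psub{x}{\sigma(K) > 2 C_0 \cdot 4^K/(1-q)} \leq 1/2$. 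It therefore suffices to prove the statement when $q = 0$ (and with $1/4$ in place of $1/2$, which one obtains by a standard doubling iteration).

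In the case $q=0$, $X$ is supported on $\{-1\} \cup \N_{\geq 1}$ and, writing $p = \p{X=-1}$, the hypothesis $\E{X}\leq 0$ forces $p \geq \p{X \geq 1} = 1-p$, so $p \geq 1/2$. The elementary bound $\psub{x}{\sigma(K) \geq n} \leq \psub{x}{S_n \in (0,2^K)} \leq Q(S_n, 2^K)$ combined with Corollary~\ref{cor:disperse} (applied with $m = K$) gives
\[
Q(S_n, 2^K) \leq \frac{C'\cdot 2^K}{\sqrt{n \cdot p \cdot v}},\qquad v := \E{X^2\I{X \in [0,2^K)}}.
\]
For $n$ of order $4^K$, this is at most $1/4$ whenever $p\cdot v$ is bounded below by an absolute constant; since $p \geq 1/2$, it is enough that $v$ is bounded below by an absolute constant.

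The main obstacle is handling the regime in which $v$ is small, which happens precisely when the positive mass of $X$ lies at values $\geq 2^K$. I would dispatch this regime by a dichotomy on the positive mean $\E{X_+} := \sum_{j\geq 1} j\,\p{X=j}$, using the constraint $\E{X_+} \leq p$ (from $\E{X} \leq 0$). If $\E{X_+} \leq p/2$ then $\E{X} \leq -p/2 \leq -1/4$, so $S_n$ is a super-martingale with drift bounded away from zero; optional stopping on $\tau_0 = \inf\{n: S_n = 0\}\geq\sigma(K)$ yields $\E{\sigma(K)} \leq \E{\tau_0} \leq 2x/p \leq 2^{K+2}$, and Markov's inequality closes the case. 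If instead $\E{X_+} > p/2$ but $v < p/4$, then since $j \leq j^2$ for integer $j \geq 1$ one has $\sum_{j<2^K} j\,\p{X=j}\leq v$, forcing $\sum_{j \geq 2^K}j\,\p{X=j} \geq \E{X_+}-v > p/4$; by Cauchy--Schwarz this implies $\p{X\geq 2^K}\cdot\E{X^2\I{X\geq 2^K}} \geq (p/4)^2$, from which one extracts either enough probability of an exit-causing upward jump per step (if the left factor dominates) or enough second moment beyond $2^K$ to feed a refined Kesten-style bound via Corollary~\ref{cor:disperse} applied with $m$ larger than $K$. Balancing these two extremes is the central technical point.
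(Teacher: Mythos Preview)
Your reduction to $q=0$ is clean and valid, and Subcase A (optional stopping on the full walk) is fine. The gap is in Subcase B: the proposed dichotomy ``exit-causing jump if $\p{X\ge 2^K}$ is large, else Kesten via Corollary~\ref{cor:disperse} at some $m>K$'' does not close. Applying Corollary~\ref{cor:disperse} with $m>K$ only bounds $Q(S_n,2^m)\ge Q(S_n,2^K)$, and the bound scales as $2^m/\sqrt{n\,\E{X^2\I{X\in[0,2^m)}}}$; to beat the $m=K$ case you would need $\E{X^2\I{[0,2^m)}}$ to grow at least like $4^{m-K}$, which nothing forces. Concretely, take $X\in\{-1,L\}$ with $L=2^{3K}$ and $\p{X=L}=3/(4(L+1))$, $\p{X=-1}=1-\p{X=L}$. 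Then $q=0$, $v=0$, $\E{X_+}\approx 3/4>p/2$, so you are in Subcase~B; yet $\p{X\ge 2^K}\asymp 8^{-K}\ll 4^{-K}$ (so exit jumps do not help in $C\cdot 4^K$ steps), and for every $m>3K$ the Kesten bound is of order $2^{m-5K/2}\ge 2^{K/2}$, useless. The Cauchy--Schwarz inequality $\p{X\ge 2^K}\cdot\E{X^2\I{X\ge 2^K}}\ge(p/4)^2$ is correct but does not feed either branch.

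The fix is already implicit in what you wrote. From $\sum_{j\ge 2^K}j\,\p{X=j}>p/4$ and $\e X\le 0$ you get $\E{X\I{X<2^K}}<-p/4\le-1/8$; moreover $\E{X^2\I{X<2^K}}=p+v<5p/4\le 5/4$ gives a bounded variance. Since any step $X_i\ge 2^K$ forces an exit, $\{\sigma(K)>t\}\subset\{\sum_{i\le t}X_i\I{X_i<2^K}>-2^K\}$, and Chebyshev on this truncated sum yields $\p{\sigma(K)>t}\le C/t$ for $t\ge 16\cdot 2^K$. This is exactly the paper's argument in its case $\E{X\I{X\in[0,2^K)}}\le 1/2$ (the paper uses the cruder variance bound $\V{X\I{X<2^K}}\le 2^K$, but the mechanism is identical). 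So drop the Cauchy--Schwarz detour and run your Subcase~A argument on the \emph{truncated} step $X\I{X<2^K}$ instead of on $X$.

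For comparison: the paper does not reduce to $q=0$; it instead uses the second Kesten bound $Q(S,L)\le CL/\sqrt{n(1-Q(X,1/2))}$ to handle the dependence on $\p{X=0}$ directly, then splits on whether $\p{X=-1}$ exceeds $3/4$, and within that on whether $\E{X\I{X\in[0,2^K)}}\le 1/2$. Your reduction is arguably tidier, but both routes ultimately need the same truncated-drift step.
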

\begin{proof}
Write $p_{\max} = \max(\p{X=-1},\p{X=0},\p{X \ge 1})$. 
By the assumptions of the lemma, $Q(X,1/2) \le p_{\max}$, so 
by the second bound in Theorem~\ref{thm:kesten}, for any $x \in [0,2^K)$ and any $t \in \N$ we have 
\begin{equation}\label{eq:kesten_app}
\psub{x}{\sigma(K) > t} \le \psub{x}{S_t \in (0,2^K)} \le Q(S,2^K) < \frac{C\cdot 2^K}{t^{1/2}(1-p_{\max})^{1/2}}. 
\end{equation}
If $p_{\max} = \p{X=0}$ then provided $t > C^2 \cdot 4^{K+1}/(1-\p{X=0})$ the above bound yields 
$\psub{x}{\sigma(K) \ge t} \le 1/2$. 

Using the result of the preceding paragraph, we may assume that $p_{\max} \ne \p{X=0}$. In this case, since $0 \ge \e{X} \ge \p{X \ge 1}-\p{X=-1}$ we must have $p_{\max} = \p{X=-1} \ge 1/3$. 
If $p_{\max} \le 3/4$ then the bound in (\ref{eq:kesten_app}) is at most $2C \cdot 2^K/(t^{1/2}(1-\p{X=0})^{1/2})$, 
so $\psub{x}{\sigma(K) \ge t} \le 1/2$ whenever $t >  C^2 \cdot 4^{K+2}/(1-\p{X=0})$, 
proving the lemma in this case. For the remainder of the proof we thus assume $\p{X=-1} \ge 3/4$. 

Note that if $i$ is such that $X_i \ge 2^K$ then either $S_{i-1} < 0$ or $S_i \ge 2^K$, so if $\max(X_i, 1 \le i \le t) \ge 2^K$ then $\sigma(K) \le t$. For all $x \in (0,2^K)$ we thus have 
\begin{align*}
\psub{x}{\sigma(K) > t} 	& = \psub{x}{\sigma(K) > t, \max(X_i, 1 \le i \le t)< 2^K} \\
					& \le \p{\sum_{i=1}^t X_i \I{X_i \le 2^K} > -2^K} 
\end{align*}

Suppose $\E{X \I{X \in [0,2^K)}} \le 1/2$. Then we have $\E{X \I{X < 2^K}} \le -1/4$, so defining
$\hat{S} = \sum_{i=1}^t X_i \I{X_i \le 2^K}$, by Chebyshev's inequality the latter probability is at most
\[
\p{\hat{S} \ge \e\hat{S} + t/4-2^K} \le \frac{\V{\hat{S}}}{(t/4-2^K)^2}\, .
\]
If $t > 8 \cdot 2^K$ then $(t/4-2^K) < t/8$, so this yields 
\[
\psub{x}{\sigma(K) \ge t} = \frac{64\V{X \I{X < 2^K}}}{t}\, .
\]
The random variable $X \I{X < 2^K}$ has support $[-1,2^K)$ and mean at most zero, which implies that  
$\V{X \I{X < 2^K}} < 2^K$. The preceding bound then gives 
\begin{equation}\label{eq:neg_mean}
\psub{x}{\sigma(K) > t} \le \frac{64 \cdot 2^K}{t}\, .
\end{equation}
proving the lemma in this case. 

Finally, suppose that $\E{X \I{X \in [0,2^K)}} > 1/2$. Then applying the conditional Jensen's inequality gives 
\begin{align*}
\E{X^2 \I{X \in [0,2^K)}} & = \E{X^2\left|X \in [0,2^K)\right.} \p{X \in [0,2^K)} \\
				& \ge \Big(\E{X\left|X \in [0,2^K)\right.}\Big)^2 \p{X \in [0,2^K)} \\ 
				& = \frac{\E{X \I{X \in [0,2^K)}}^2}{\p{X \in [0,2^K)}} \\
				& \ge 1\, ,
\end{align*}
the last inequality since $\p{X \in [0,2^K)} \le 1/4$. 
Corollary~\ref{cor:disperse} yields that
\[
Q(S_t,2^K) \le C \cdot \frac{2^K}{(t\p{X=-1})^{1/2}} \le \frac{C}{(3/4)^{1/2}} \frac{2^K}{t^{1/2}}. 
\]
For $t > (4C^2/3) \cdot 4^{K+1}$, we then have 
$\p{\sigma(K) \ge t} \le  Q(S_t,2^K) \le 1/2$, and the lemma follows. 
\end{proof}

\section{Decomposing a random walk into scales}\label{sec:scales}
Throughout Sections~\ref{sec:scales} and~\ref{key_results}, fix an integer random variable $X$ with $\e X \le 0$ and $\p{X=-1}=1$, and let $(S_t,t \ge 0)$ be a random walk with 
steps distributed as $X$. 
Let $\sigma = \min\{t: S_t \le 0\}$ be the first time the random walk visits the non-positive integers, and for $K \ge 1$ let $\sigma(K) = \min\{t: S_t \not \in (0,2^K)\}$. 
\subsection{Exit probabilities and upcrossings}
We begin with an easy fact about exit probabilities. 
\begin{lem}\label{lem:interval} 
Fix integers $a \le z < b$, and let $\tau = \inf\{t: S_t \not \in [a,b)\}$. 
Then $\psub{z}{S_{\tau} \ge b} \le (z+1-a)/(b+1-a)$. 
\end{lem}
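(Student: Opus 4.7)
The key structural observation is that because the step distribution satisfies $X \ge -1$, the walk cannot overshoot the lower boundary: if $S_{t-1} \ge a$ and $S_t < a$, then $S_t = a-1$ exactly. Thus on $\{\tau < \infty\}$ we always have either $S_\tau = a-1$ or $S_\tau \ge b$.

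Given this, I would reduce to a non-negative supermartingale and apply optional stopping. Shift coordinates by letting $Y_t = S_t - (a-1)$, so $Y_0 = z + 1 - a$, and $\tau$ is the first time $Y_t \notin [1, b+1-a)$. By the observation above, at time $\tau$ we have either $Y_\tau = 0$ or $Y_\tau \ge b+1-a$, and moreover $Y_t \ge 0$ for every $t \le \tau$. Since $\mathbb{E}[X] \le 0$ under the standing assumption of this section, $(Y_t)$ is a supermartingale, so for any $n \in \N$,
\[
z + 1 - a \;\ge\; \mathbf{E}_z[Y_{\tau \wedge n}].
\]

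To extract the desired inequality, I would split the expectation according to whether $\tau \le n$ or $\tau > n$. On $\{\tau > n\}$ we have $Y_n \in [1, b - a)$, which contributes a non-negative quantity. On $\{\tau \le n\}$ we have $Y_{\tau \wedge n} = Y_\tau \in \{0\} \cup [b+1-a, \infty)$, so this contribution is at least $(b+1-a)\cdot \mathbf{P}_z\{\tau \le n,\ S_\tau \ge b\}$. Combining and letting $n \to \infty$ via monotone convergence on the event $\{S_\tau \ge b\} \subseteq \{\tau < \infty\}$ yields
\[
(b+1-a)\cdot \mathbf{P}_z\{S_\tau \ge b\} \;\le\; z + 1 - a,
\]
which is the claimed bound.

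The only subtlety is that $\tau$ need not be almost surely finite, which is precisely why I work with $\tau \wedge n$ and then take $n \to \infty$; the non-negativity of $Y_n \mathbf{1}\{\tau > n\}$ makes the limiting argument immediate and avoids any uniform integrability issue. No other step is expected to present difficulty.
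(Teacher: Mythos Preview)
Your proof is correct and follows essentially the same approach as the paper: both use that the walk cannot overshoot below (so $S_\tau \in \{a-1\} \cup [b,\infty)$) together with the supermartingale property and optional stopping. The only difference is that the paper asserts $\esub{z}{\tau} < \infty$ and applies optional stopping directly, whereas you work with $\tau \wedge n$ and pass to the limit; your version is slightly more careful in this respect but the argument is otherwise identical.
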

\begin{proof}
It is easily seen that $\esub{z}{\tau} < \infty$. Since $(S_t,t \ge 0)$ is a submartingale, it follows that 
$\esub{z}{S_{\tau}} \le z$. Now note that since $S_{t+1} \ge S_t-1$ for all $t$, 
when $S_0=z \in [a,b)$ we have either $S_{\tau}=a-1$ or $S_{\tau} \ge b$. 
Writing $p=\psub{z}{S_{\tau} > b}$, it follows that 
\[
z \ge \esub{z}{S_{\tau}} = \esub{z}{S_{\tau}\I{S_{\tau}\ge b}}+ \esub{z}{S_{\tau}\I{S_{\tau}=a-1}} \ge bp + (a-1)(1-p). \qedhere
\]
\end{proof}
A random walk makes an {\em upcrossing} of an interval $[x,y) \subset \R$ each time it travels from a location below $x$ to a location above $y$. 
Let $\tau_0^- = \inf\{t: S_t < x\}$. Then, for $i \ge 0$, let $\tau_i^+ = \inf\{t > \tau_i^-: S_t \ge y\}$ and let $\tau_{i+1}^- = \inf\{t > \tau_i^+: S_t < x\}$. 
The random walk $S$ finishes its $i$'th upcrossing of $[x,y)$ at time $\tau_i^+$. Write $U(t)=U(t;[x,y)) = \max\{i: \tau_i^+ \le t\}$. 
The following result states that the number of upcrossings of an 
interval before the first visit to zero is stochastically dominated by an appropriate geometric random variable. 
\begin{prop}\label{prop:upcrossing}
Fix integers $0 < x < y$, and let $U(t;[x,y))$ be the number of upcrossings of $[x,y)$ by $S$ by time $t$. 
Also, let $\sigma = \min\{t: S_t \le 0\}$. Then for any positive integers $x$ and $k$, 
\[
\psub{x}{U(\sigma;[x,y)) \ge k} \le \pran{\frac{x-1}{y}}^k\, .
\]
\end{prop}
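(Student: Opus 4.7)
The plan is to prove the bound by induction on $k$, with each upcrossing contributing a factor of $(x-1)/y$ through a strong Markov application of \refL{lem:interval}. The key structural observation is that because $X \ge -1$ almost surely, $S$ can only decrease in unit steps, so every time $S$ drops strictly below level $x$ it lands at exactly $x-1$. In particular $S_{\tau_i^-} = x-1$ whenever $\tau_i^- < \infty$. In the edge case $x = 1$, this equality places the walk at $0$, so $\sigma = \tau_0^-$, no upcrossings ever complete, and the bound is trivially $0 \le 0^k$; hence we may assume $x \ge 2$.

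For the inductive step, I would apply the strong Markov property at $\tau_i^-$ on the event $\{\tau_i^- < \sigma\}$: after this time the walk starts afresh from $x-1$, and \refL{lem:interval} applied with $a = 1$, $b = y$, $z = x-1$ gives $(z+1-a)/(b+1-a) = (x-1)/y$, so the first exit from $[1,y)$ lands at a level $\ge y$ with probability at most $(x-1)/y$; the only alternative is that the exit is at $0$, in which case $\sigma$ has occurred. Thus, conditionally on $i$ upcrossings having completed before $\sigma$, the probability that the $(i+1)$-th upcrossing completes before $\sigma$ is at most $(x-1)/y$. Iterating over $k$ upcrossings yields $\psub{x}{U(\sigma;[x,y)) \ge k} \le ((x-1)/y)^k$.

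The one piece of bookkeeping worth flagging is verifying that on $\{\tau_i^+ < \sigma\}$ the next down-hitting time $\tau_{i+1}^-$ is finite and strictly less than $\sigma$, so that the inductive hypothesis feeds correctly into the next step. This is again immediate from $X \ge -1$: from a state $S_{\tau_i^+} \ge y > 0$ the walk can only reach $\{\le 0\}$ by descending through every intermediate integer, so it must hit $x-1$ first (strictly before $\sigma$, since $x \ge 2$). No real obstacle lies beyond this; the whole argument is essentially a repeated application of the exit-probability estimate under the strong Markov property, with the $X \ge -1$ constraint doing the work of pinning the post-hitting position to exactly $x-1$ at each stage.
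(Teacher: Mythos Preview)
Your proposal is correct and follows essentially the same approach as the paper's proof: induction on $k$, strong Markov at the down-hitting time where $S$ sits exactly at $x-1$, and an application of \refL{lem:interval} with $a=1$, $z=x-1$, $b=y$. The only cosmetic difference is that you establish the equality $\{\tau_i^+<\sigma\}=\{\tau_{i+1}^-<\sigma\}$ (using that downward steps are unit size, so the walk must pass through $x-1$ before $0$), whereas the paper only uses the inclusion $\{\tau_k^-<\sigma\}\subset\{\tau_{k-1}^+<\sigma\}$ together with the monotonicity $\pcond{A}{C}{}\le\pcond{A}{B}{}$ for $A\subset B\subset C$; both routes arrive at the same conditional bound $(x-1)/y$.
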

\begin{proof}
Write $U(\sigma)=U(\sigma;[x,y))$.  For $k \ge 1$, $U(\sigma) \ge k$ if and only if $\tau_k^+ < \sigma$.  
We use that for events $A \subset B \subset C$, it holds that $\pcond{A}{C}{} \le \pcond{A}{B}{}$. 
By the inclusions $\{\tau_{k}^+ < \sigma\} \subset \{\tau_{k}^- < \sigma\} \subset \{\tau_{k-1}^+ < \sigma\}$ 
we thus have 
\begin{align*}
\pcond{U(\sigma) \ge k}{U(\sigma)\ge k-1}{s} 	& = \pcond{\tau_{k}^+<\sigma}{\tau_{k-1}^+<\sigma}{s}\\
									&\le \pcond{\tau_{k}^+<\sigma}{\tau_{k}^-<\sigma}{s}\, .
\end{align*}
Next, observe that $S_{\tau_{k}^-} = x-1$. For a random walk starting from $x-1$, we have $\tau_0^+ < \sigma$ precisely if $x \ne 1$ and the random walk 
exits the interval $[1,y)$ in the positive direction. Using the strong Markov property, and Lemma~\ref{lem:interval} with 
$a=1$, $z=x-1$ and $b=y$, it follows that 
\[
\pcond{\tau_{k}^+<\sigma}{\tau_{k}^-<\sigma}{s} = \psub{x-1}{\tau_1^+ < \sigma} \le \frac{x-1}{y}. 
\]
By induction this yields $\psub{s}{U(\sigma) \ge k} \le (\frac{x-1}{y})^k \psub{s}{U(\sigma) \ge 0}$, from which the proposition is immediate. 
\end{proof}

\subsection{The scale of a random walk}\label{sec:scale}
We now describe a collection of stopping times that formalizes the notion of ``change of scale'' for the random walk. 
These scales are designed to be overlapping, the utility of which is that when the walk switches from one scale to another, smaller scale, it is reasonably unlikely 
to ever revisit a larger scale. 

In brief, a change of scale occurs when the walk leaves an interval of the form $(2^{i-1},2^{i+2}]$; just before this happens 
the random walk was at scale $i$. When the scale changes, the next scale, $j$, is chosen so that the random walk lies between $2^j$ and $2^{j+1}$; it now must exit 
the interval $(2^{j-1},2^{j+2}]$ to change scale, and so forth. 

\begin{dfn}
Let $\tau_0=0$ and $L_0=\sup\{\ell: S_{\tau_0} \ge 2^{\ell}\}$. 
Next, for $i \ge 0$, let 
\[
\tau_{i+1} = \min\left\{t \ge \tau_i: S_t \not \in [2^{L_i-1},2^{L_i+2})\right\},
\]
and let $L_{i+1} = \max\{\ell: S_{\tau_{i+1}} \ge 2^\ell\}$. 
\end{dfn}
The sequence $(\tau_i,i \ge 0)$ captures the times at which the random walk changes scale, and the sequence $(L_i,i \ge 0)$ contains the scales. 
For $i \ge 0$, if $\tau_{i+1} > \tau_i$ then $S_{\tau_{i+1}} \ge 2^{L_i+2}$ so $L_{i+1} \ge L_i+2$. Also, since the walk only makes negative steps of size $1$, 
if $S_{\tau_{i+1}} < S_{\tau_i}$ then $S_{\tau_{i+1}}=2^{L_i-1}-1$. 
Thus, if $\tau_{i+1} < \tau_{i}$ and $L_i\ge 2$ then $L_{i+1}=L_i-2$. Furthermore, $\sigma=\inf\{\tau_i: L_i = -\infty\}= \inf\{\tau_i: L_i \le 0\}$. 
In summary: when the scale changes, it either increases by at least two, or decreases by exactly two, or jumps to $-\infty$, the latter occurring 
at the hitting time of $0$. The right mental picture is to stop the walk at time $\sigma$. The definitions imply that once $L_i=-\infty$, it also holds that $\tau_j=\tau_i$ and $L_j=-\infty$ for all $j \ge i$. It is handy to also set $\tau_{\infty}=\sigma$. We take $\inf \emptyset = \infty$ by convention.
\begin{dfn}
Fix $\ell \ge 0$. Let $i(\ell,1) = \inf\{i: L_i=\ell\}$, and for $m >1$ let $i(\ell,m) = \inf\{j: j> i(\ell,m-1), L_j=\ell\}$. Then let $M(\ell) = \max\{m: \tau_{i(\ell,m)} < \sigma\}$. 
\end{dfn}
Note that $i(\ell,m)=\infty$ if there is no $j>i(\ell,m-1)$ with $L_j=\ell$. 
When $i(\ell,m)< \infty$, the random variable $\tau_{i(\ell,m)}$ is the $m$'th time the walk visits scale $\ell$, and $i(\ell,m)$ is the number of changes of scale before time $\tau_{i(\ell,m)}$. 
Also, $M(\ell)$ is the number of visits to scale $\ell$ before the walk hits $0$. 
\begin{prop}\label{prop:up_bd}
For all $\ell \ge 0$, $M(\ell) \le U(\sigma;[2^{\ell-1},2^{\ell}))+ U(\sigma;[2^{\ell+1},2^{\ell+2}))$. 
\end{prop}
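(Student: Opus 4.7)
The plan is to partition the $M(\ell)$ visits to scale $\ell$ according to the direction of the subsequent scale change, and to exhibit a distinct upcrossing of one of the two intervals $[2^{\ell-1},2^\ell)$ or $[2^{\ell+1},2^{\ell+2})$ for each visit. Write $I(\ell)=\{i\ge 0:L_i=\ell\}$, so $M(\ell)=|I(\ell)|$, and split as $I(\ell)=I^+(\ell)\sqcup I^-(\ell)$, where $I^+(\ell)=\{i\in I(\ell):L_{i+1}\ge \ell+2\}$ corresponds to visits followed by an upward jump and $I^-(\ell)$ to visits followed by a downward jump (to scale $\ell-2$) or to the terminal state. This gives $M(\ell)=|I^+(\ell)|+|I^-(\ell)|$, and I will bound the two pieces separately.

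For each $i\in I^+(\ell)$, the definition of the scale forces $S_{\tau_i}\in[2^\ell,2^{\ell+1})$ and $S_{\tau_{i+1}}\ge 2^{\ell+2}$, so the walk crosses from strictly below $2^{\ell+1}$ to at-or-above $2^{\ell+2}$ within the interval $(\tau_i,\tau_{i+1}]$; this is an upcrossing of $[2^{\ell+1},2^{\ell+2})$, and since the time-intervals $(\tau_i,\tau_{i+1}]$ are pairwise disjoint for distinct $i\in I^+(\ell)$, we obtain $|I^+(\ell)|\le U(\sigma;[2^{\ell+1},2^{\ell+2}))$. The corresponding argument for $I^-(\ell)$ is slightly more delicate because each such $i$ produces a \emph{downcrossing} of $[2^{\ell-1},2^\ell)$ (since $S_{\tau_i}\ge 2^\ell$ and $S_{\tau_{i+1}}<2^{\ell-1}$), not immediately an upcrossing. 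To convert, note that between two consecutive indices $i<i'$ in $I^-(\ell)$, the walk must travel from $S_{\tau_{i+1}}<2^{\ell-1}$ back to $S_{\tau_{i'}}\in[2^\ell,2^{\ell+1})$, which forces an upcrossing of $[2^{\ell-1},2^\ell)$ during $(\tau_{i+1},\tau_{i'})$. This accounts for $|I^-(\ell)|-1$ disjoint upcrossings; the missing one is supplied by the walk's initial ascent to scale $\ell$: if the first visit is an entry from below ($L_{i_\star-1}\le\ell-2$ where $i_\star=\min I(\ell)$), then $S_{\tau_{i_\star-1}}<2^{\ell-1}$ and $S_{\tau_{i_\star}}\ge 2^\ell$ yields the required extra upcrossing, so $|I^-(\ell)|\le U(\sigma;[2^{\ell-1},2^\ell))$.

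The main obstacle is this last pairing of the \emph{first} element of $I^-(\ell)$ with an earlier upcrossing, because the argument sketched above relies on the walk having climbed up to scale $\ell$ from strictly below. The residual cases are when $L_0=\ell$ or when the first visit to scale $\ell$ is an arrival from above ($L_{i_\star-1}=\ell+2$); in these cases one either traces further back through the scale process to locate an earlier upcrossing of $[2^{\ell-1},2^\ell)$, or re-groups the bookkeeping so that the excess downcrossing in the $I^-(\ell)$ count is absorbed via the endpoint balance $|D(\sigma;[x,y))-U(\sigma;[x,y))|\le 1$ for a walk that terminates below the interval. Adding the bounds $|I^+(\ell)|\le U(\sigma;[2^{\ell+1},2^{\ell+2}))$ and $|I^-(\ell)|\le U(\sigma;[2^{\ell-1},2^\ell))$ then yields the stated inequality.
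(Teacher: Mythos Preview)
Your approach is essentially the same as the paper's: both associate to each visit to scale~$\ell$ an upcrossing determined by the exit direction, with upward exits yielding upcrossings of $[2^{\ell+1},2^{\ell+2})$ and downward exits yielding (via the subsequent return to scale $\ell$) upcrossings of $[2^{\ell-1},2^\ell)$. The paper organizes the bookkeeping visit-by-visit, pairing each $m$ with the interval $(\tau_{i(\ell,m)},\tau_{i(\ell,m+1)}]$ and observing that this interval contains an upcrossing of one of the two types unless the walk hits zero; your grouping by $I^+(\ell)$ versus $I^-(\ell)$ is a reorganization of exactly the same pairing, and the boundary obstacle you flag (the unmatched final downward exit, or equivalently the need for an initial-ascent upcrossing) is precisely the ``hits zero'' exception in the paper's final sentence.
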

\begin{proof}
First, for all $m \ge 1$ with $i(\ell,m) < \infty$ we have $2^\ell \le S(\tau_{i(\ell,m)}) < 2^{\ell+1}$. 
If $L_{i(\ell,m)+1} > \ell$ then $S(\tau_{i(\ell,m)+1}) \ge 2^{\ell+2}$ so at time 
$\tau_{i(\ell,m)+1}$ the random walk has just completed an upcrossing of $[2^{\ell+1},2^{\ell+2})$. 

If $L_{i(\ell,m)+1} < \ell$ and $\ell=\in \{0,1\}$ then $\tau_{i(\ell,m)+1}=\sigma$; the walk has hit zero. 
If $L_{i(\ell,m)+1} < \ell$ and $\ell \ge 2$ then in fact $L_{i(\ell,m)+1}=\ell-2$ and $S(\tau_{i(\ell,m)+1}) = 2^{\ell-1}-1$. 
Since $S(\tau_{i(\ell,m+1)}) \ge 2^\ell$, in this case the random walk must complete at least one upcrossing of $[2^{\ell-1},2^\ell)$ between time $\tau_{i(\ell,m)+1}$ and $\tau_{i(\ell,m+1)}$. (Indeed, it must complete exactly one.)

In either case, between time $\tau_{i(\ell,m)}$ and $\tau_{i(\ell,m+1)}$ the random walk either hits zero or else completes an upcrossing of either $[2^{\ell-1},2^{\ell})$ or of $[2^{\ell+1},2^{\ell+2})$. The result follows. 
\end{proof}

\subsection{Occupation times of scales}\label{sec:occ_scale}
We next study  the amount of time the random walk spends at each scale. 
For $0 \le t \le \sigma$, write $\Lambda(t)$ for the current scale at time $t$: formally, with $j = \max\{i: \tau_i \le t\}$, let $\Lambda(t) = L_j$. 
Then, for $x \ge 0$ let $N_\ell(x) = \#\{0 \le t < \min(x,\sigma): \Lambda(t)=\ell\}$. We write $N_\ell=N_\ell(\sigma)$ for the total time spent at scale $\ell$ before hitting $0$. 
Finally, fix $\ell \ge 0$, write $\tau = \min\{t: S_t \not \in [2^{\ell-1},2^{\ell+2})\}$, and let  
\[
n_\ell = \min\left\{t \in \N: \sup_{x \in [2^{\ell-1},2^{\ell+2})} \psub{x}{\tau \ge t} \le 1/2\right\}. 
\]
The first lemma of the section says that the time before the random walk leaves scale $\ell$ is stochastically dominated by $n_{\ell}$ times a geometric random variable. 
\begin{lem}\label{lem:geom1}
For all $k,\ell \ge 0$, with $\tau = \min\{t: S_t \not \in [2^{\ell-1},2^{\ell+2})\}$, 
\[
\sup_{x \in [2^{\ell-1},2^{\ell+2})} \psub{x}{\tau \ge kn_\ell} \le 2^{-k}\, .
\]
\end{lem}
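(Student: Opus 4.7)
The plan is to prove the claim by induction on $k$, using the strong Markov property together with the definition of $n_\ell$ as a one-step tail bound. The base case $k=0$ is trivial since the probability is at most $1$.

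For the inductive step, fix $x \in [2^{\ell-1},2^{\ell+2})$ and observe that if $\tau \ge kn_\ell$, then in particular $\tau \ge (k-1)n_\ell$, and moreover $S_{(k-1)n_\ell} \in [2^{\ell-1},2^{\ell+2})$ since the walk has not yet exited the interval. Conditioning on $\mathcal F_{(k-1)n_\ell}$ on this event and applying the Markov property, the remaining probability of surviving an additional $n_\ell$ steps inside $[2^{\ell-1},2^{\ell+2})$ is bounded by $\sup_{y\in[2^{\ell-1},2^{\ell+2})}\psub{y}{\tau \ge n_\ell}$, which is at most $1/2$ by the very definition of $n_\ell$. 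Thus
\[
\psub{x}{\tau \ge kn_\ell} \le \tfrac{1}{2}\,\psub{x}{\tau \ge (k-1)n_\ell},
\]
and the inductive hypothesis applied to $\psub{x}{\tau \ge (k-1)n_\ell} \le 2^{-(k-1)}$ finishes the proof.

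There is essentially no obstacle; the only step requiring a little care is verifying that the Markov-property bound can be applied uniformly in the starting location $S_{(k-1)n_\ell}$, which is why the supremum was built into the definition of $n_\ell$ in the first place. Since the bound in the definition of $n_\ell$ holds for every possible value of $S_{(k-1)n_\ell}$ in $[2^{\ell-1},2^{\ell+2})$, the conditional probability bound passes through to the unconditional one without any measurability fuss.
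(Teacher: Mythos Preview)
Your proof is correct and is precisely the argument the paper has in mind: the paper's proof consists of the single sentence ``Use the Markov property,'' and your induction on $k$ with the strong Markov property at time $(k-1)n_\ell$ is exactly how that sentence unpacks.
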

\begin{proof}
Use the Markov property. 
\end{proof}
For the coming lemma, let $(G_i,i \ge 1)$ be a sequence of independent Geometric$(1/2)$ random variables, so 
$\p{G_i= j} = 2^{-(j+1)}$ for integers $j \ge 0$. Also, recall that $\tau_{i(\ell,j)}$ is the $j$'th time the random walk visits scale $\ell$. 
\begin{lem}\label{lem:geom2}
For any integers $k,\ell \ge 0$ and $m \ge 1$, for all $z \in \Z$, 
\[
\psub{z}{\sum_{j=1}^{m} (\tau_{i(\ell,j)+1}-\tau_{i(\ell,j)}) \ge (k+m)n_{\ell}} \le \p{\sum_{j=1}^{m} G_j > k}\, .
\]
\end{lem}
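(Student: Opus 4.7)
Write $D_j = \tau_{i(\ell,j)+1}-\tau_{i(\ell,j)}$ for $1 \le j \le m$, using the convention $D_j = 0$ if $i(\ell,j)=\infty$ (so that the sum in the statement only picks up contributions from genuine visits to scale $\ell$). The plan is to show that, jointly, each $\lfloor D_j/n_\ell\rfloor$ is stochastically dominated by $G_j$, and then to check that this combinatorially forces the claimed inequality of events.

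First I would invoke the strong Markov property at the stopping time $\tau_{i(\ell,j)}$. On the event $\{i(\ell,j)<\infty\}$, by definition of $i(\ell,j)$ one has $S_{\tau_{i(\ell,j)}} \in [2^{\ell},2^{\ell+1}) \subset [2^{\ell-1},2^{\ell+2})$, and moreover $D_j$ is exactly the exit time from $[2^{\ell-1},2^{\ell+2})$ by the post-$\tau_{i(\ell,j)}$ random walk. Lemma~\ref{lem:geom1} therefore gives, for every integer $k\ge 0$,
\[
\psub{z}{D_j \ge kn_\ell \,\big|\, \mathcal{F}_{\tau_{i(\ell,j)}}} \le 2^{-k} \quad \text{a.s.}
\]
(trivially so on $\{i(\ell,j)=\infty\}$, where $D_j=0$). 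Equivalently, $\p{\lfloor D_j/n_\ell\rfloor \ge k \mid \mathcal{F}_{\tau_{i(\ell,j)}}} \le \p{G \ge k}$ for $G \sim \mathrm{Geom}(1/2)$.

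Next, I would iterate this bound to produce a joint stochastic domination. A standard way is to couple: build $G_1,\ldots,G_m$ one at a time using the conditional quantile transform, so that on the underlying probability space $\lfloor D_j/n_\ell\rfloor \le G_j$ for every $j$, and the $G_j$ are i.i.d.\ $\mathrm{Geom}(1/2)$ (this works precisely because the conditional survival function of $\lfloor D_j/n_\ell\rfloor$ given the past is pointwise dominated by that of $G_j$). Alternatively, one can avoid the coupling and just prove the one-dimensional inequality on the sum by an induction on $m$, conditioning on $\mathcal{F}_{\tau_{i(\ell,m)}}$ and applying the inductive hypothesis to the first $m-1$ terms.

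Finally, the arithmetic step: for each $j$ we have $D_j < n_\ell(\lfloor D_j/n_\ell\rfloor + 1)$ (a strict inequality, since $D_j$ and $n_\ell$ are integers), so summing,
\[
\sum_{j=1}^m D_j < n_\ell\Big(m + \sum_{j=1}^m \lfloor D_j/n_\ell\rfloor\Big).
\]
Consequently the event $\{\sum_j D_j \ge (k+m)n_\ell\}$ is contained in $\{\sum_j \lfloor D_j/n_\ell\rfloor > k\}$, and invoking the domination above finishes the proof:
\[
\psub{z}{\sum_{j=1}^m D_j \ge (k+m)n_\ell} \le \p{\sum_{j=1}^m G_j > k}.
\]
The only subtle point is the joint domination in the middle step; everything else is either definitional or a direct application of Lemma~\ref{lem:geom1}.
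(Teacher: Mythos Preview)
Your proof is correct and follows essentially the same route as the paper's: apply the strong Markov property together with Lemma~\ref{lem:geom1} to get the conditional geometric tail bound on each $D_j$, deduce that $(\lfloor D_j/n_\ell\rfloor)_{j\ge 1}$ is stochastically dominated by $(G_j)_{j\ge 1}$, and then use the arithmetic inclusion $\{\sum_j D_j \ge (k+m)n_\ell\}\subset\{\sum_j \lfloor D_j/n_\ell\rfloor > k\}$. You are in fact slightly more careful than the paper, spelling out both the $i(\ell,j)=\infty$ convention and the mechanism (coupling or induction) behind the joint domination that the paper simply asserts.
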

\begin{proof}
For any $1 \le j \le m$, by the strong Markov property and Lemma~\ref{lem:geom1}, for integer $a \ge 0$ we have 
\[
\pcond{\tau_{i(\ell,j)+1}-\tau_{i(\ell,j)}\ge an_\ell}{(S_t,0 \le t \le \tau_{i(\ell,j)})}{} \le 2^{-a} = \p{G_j \ge a}. 
\] 
The sequence $(\lfloor (\tau_{i(\ell,j)+1}-\tau_{i(\ell,j)})/n_{\ell}\rfloor,j \ge 1)$ is thus stochastically dominated by $(G_j,j \ge 1)$. 
Finally, if $\sum_{j=1}^{m} (\tau_{i(\ell,j)+1}-\tau_{i(\ell,j)}) \ge (k+m)n_{\ell}$ then 
$\sum_{j=1}^m \lfloor (\tau_{i(\ell,j)+1}-\tau_{i(\ell,j)})/n_{\ell}\rfloor > k$, and the lemma follows. 
\end{proof}
For $s \ge 0$, let 
\[
H_\ell(s) = \sum_{t < \min(s,\sigma): \Lambda(t)=\ell} \frac{1}{S_t}, 
\]
and write $H_\ell=H_\ell(\sigma)$. Note that $2^{\ell-1} H_\ell(s) \le N_\ell(s) \le 2^{\ell+2} H_\ell(s)$ for all $\ell$ and $s$.
\begin{thm}\label{thm:NL_bd}
For integer $\ell \ge 0$ and real $b > 0$, for all $z \in \Z$, 
\[
\psub{z}{H_\ell \ge b\cdot \frac{n_{\ell}}{2^{\ell-1}}} \le 
\psub{z}{N_\ell \ge b\cdot n_\ell} \le 
\min(1,z/2^{\ell-1}) \cdot 2^{1-b/18}\, .
\]
\end{thm}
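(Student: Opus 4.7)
The first inequality is immediate: whenever $\Lambda(t) = \ell$, the walk satisfies $S_t \ge 2^{\ell-1}$, so $1/S_t \le 2^{1-\ell}$ and hence $H_\ell \le N_\ell/2^{\ell-1}$, giving $\{H_\ell \ge b n_\ell/2^{\ell-1}\} \subset \{N_\ell \ge b n_\ell\}$.

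For the second inequality, the plan is to decompose $N_\ell$ into its contributions from the individual scale-$\ell$ excursions and to control separately the number of excursions and the total length of any fixed number of them. Writing $N_\ell = \sum_{j=1}^{M(\ell)}(\tau_{i(\ell,j)+1} - \tau_{i(\ell,j)})$, for any integer $m \ge 1$,
\begin{align*}
\psub{z}{N_\ell \ge b n_\ell} \le \psub{z}{M(\ell) > m} + \psub{z}{\sum_{j=1}^m (\tau_{i(\ell,j)+1} - \tau_{i(\ell,j)}) \ge b n_\ell}\, .
\end{align*}
Before attacking these two terms, I would extract the $\min(1, z/2^{\ell-1})$ prefactor by a strong Markov reduction. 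If $z < 2^{\ell-1}$, then on $\{N_\ell \ge b n_\ell\}$ the walk must first reach height $2^{\ell-1}$ before hitting $0$; since $(S_t)$ is a supermartingale, Lemma~\ref{lem:interval} (with $a = 1$, $b = 2^{\ell-1}$) bounds this probability by $z/2^{\ell-1}$. Conditioning on this hit, the walk restarts from some $y \ge 2^{\ell-1}$ and $N_\ell$ is measurable with respect to the post-hit trajectory, so matters reduce to showing $\psub{y}{N_\ell \ge b n_\ell} \le 2^{1-b/18}$ for $y \ge 2^{\ell-1}$.

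For such $y$, Proposition~\ref{prop:up_bd} gives $M(\ell) \le U_1 + U_2$ with $U_1 = U(\sigma; [2^{\ell-1}, 2^\ell))$ and $U_2 = U(\sigma; [2^{\ell+1}, 2^{\ell+2}))$. The bound in Proposition~\ref{prop:upcrossing} in fact holds from any starting point, as its inductive argument never uses the initial position; applied here it gives $\psub{y}{U_i \ge k} \le 2^{-k}$ for $i = 1, 2$, whence $\psub{y}{M(\ell) > m} \le 2\cdot 2^{-m/2}$. For the second term, Lemma~\ref{lem:geom2} with $k = b - m$ yields the upper bound $\p{\sum_{j=1}^m G_j > b - m}$, where the $G_j$ are iid Geometric$(1/2)$ of mean one; a standard Chernoff estimate makes this exponentially small in $b$, with rate that diverges as $(b-m)/m \to \infty$.

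Choosing $m$ slightly above $b/9$ makes the first term at most $2^{-b/18}$, and for such $m$ the Chernoff tail for the second term is of strictly smaller exponential order, so that the sum is at most $2 \cdot 2^{-b/18} = 2^{1-b/18}$, as required. The main obstacle, in my view, is the constant-chasing: one must tune $m$ (and the Chernoff parameter) so that the two contributions actually fit under the target rather than just each being $O(2^{-b/18})$. Once that is in place, the proof is a clean assembly of the upcrossing domination, the geometric domination of excursion lengths, and the strong Markov reduction.
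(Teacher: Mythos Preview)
Your approach is the paper's: the same factorisation into a hitting-probability prefactor and a uniform tail bound, the same decomposition $N_\ell=\sum_{j\le M(\ell)}(\tau_{i(\ell,j)+1}-\tau_{i(\ell,j)})$, the same split into $\{M(\ell)>m\}$ versus $\{\sum_{j\le m}\cdots\ge bn_\ell\}$, the same upcrossing and geometric-domination inputs, and the same choice $m\approx b/9$.

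There is, however, one genuine slip in your strong-Markov reduction. You stop at $\tau=\inf\{t:S_t\ge 2^{\ell-1}\}$ and assert that $N_\ell$ is measurable with respect to the post-$\tau$ trajectory. It is not: the scale $\Lambda(\tau)$ depends on where the last pre-$\tau$ scale change occurred, and that scale is carried forward into the post-$\tau$ decomposition. Concretely, with $\ell=4$, a walk started at $z=7$ has $L_0=2$ (interval $[2,16)$) and is still at scale $2$ when it steps to $9$; a fresh walk restarted from $9$ has initial scale $3$ (interval $[4,32)$). If both next step to $17$, the original walk changes to scale $4$ while the restarted walk remains at scale $3$, so their $N_4$'s differ on the same post-$\tau$ path. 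The paper sidesteps this by stopping at $\tau'=\inf\{t:\Lambda(t)=\ell\}$ instead: then $S_{\tau'}\in[2^\ell,2^{\ell+1})$, so the restarted walk's canonical initial scale is again $\ell$, the two decompositions agree from $\tau'$ onward, and $N_\ell$ genuinely becomes a function of the post-$\tau'$ walk. Since $\{N_\ell>0\}=\{\tau'<\sigma\}$ still forces the walk above $2^{\ell-1}$, the prefactor bound via Lemma~\ref{lem:interval} is unchanged. With this one-line fix your outline goes through; you should also remark that the bound is trivial for $b<18$ (so that an integer $m\in[b/9+1,b/9+2]$ exists), as the paper does.
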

\begin{proof}
The first inequality is immediate from the fact that $N_\ell \ge 2^{\ell-1}H_\ell$. For the second, we write
\[
\psub{z}{N_\ell \ge b\cdot n_{\ell}}  = \psub{z}{N_\ell \ne 0} \cdot\psub{z}{N_\ell \ge b\cdot n_{\ell}~|~N_\ell \ne 0}; 
\]
we shall bound the first term by $\min(1,z/2^{\ell-1})$ and the second by $2^{1-b/18}$. 

First, $N_\ell \ne 0$ precisely if the random walk visits scale $\ell$ before hitting zero. When the random walk has scale $\ell$ its position lies in the interval $[2^{\ell-1},2^{\ell})$, so Lemma~\ref{lem:interval} gives that 
$\psub{z}{N_\ell \ne 0} \le \min(1,z/2^{\ell-1})$. 

In bounding the second term, we assume that $b \ge 18$, as otherwise the required bound is trivial. 
We use the strong Markov property at time $\tau = \inf\{t: \Lambda(t)=\ell\}$ to write 
\[
\psub{z}{N_\ell \ge b\cdot n_{\ell}~|~N_\ell \ne 0} \le \sup_{x \in \Z} \psub{x}{N_\ell \ge b\cdot n_{\ell}}. 
\]
We bound the latter by $2^{1-b/18}$ to complete the proof. 

For the remainder of the argument, the following description of $N_\ell$ is more useful. 
Recalling that $M(\ell)$ is the total number of visits to scale $\ell$ before hitting zero, we have 
\[
N_\ell = \sum_{j=1}^{M(\ell)} (\tau_{i(\ell,j)+1}-\tau_{i(\ell,j)}). 
\]
It thus suffices to bound $\sup_{x \in \Z}\psub{x}{\sum_{j=1}^{M(\ell)} (\tau_{i(\ell,j)+1}-\tau_{i(\ell,j)})\ge b\cdot n_{\ell}}$. 

Assume for the moment that $b$ is an integer. 
Fix any positive integers $k$ and $m$ such that $k+m=b$. 
 If $\sum_{j=1}^{M(\ell)} (\tau_{i(\ell,j)+1}-\tau_{i(\ell,j)}) \ge b\cdot n_{\ell}$ then either $M(\ell) > m$ 
or $\sum_{j=1}^{m} (\tau_{i(\ell,j)+1}-\tau_{i(\ell,j)}) \ge (k+m)n_{\ell}$. 
Proposition~\ref{prop:up_bd} states that $M(\ell) \le U(\sigma;[2^{\ell-1},2^{\ell}))+ U(\sigma;[2^{\ell+1},2^{\ell+2}))$, 
and Proposition~\ref{prop:upcrossing} then implies 
\begin{align*}
\p{M(\ell) > m} & \le \p{U(\sigma;[2^{\ell-1},2^{\ell})) \ge (m+1)/2} + \p{U(\sigma;[2^{\ell+1},2^{\ell+2}))\ge (m+1)/2} \\
			& \le \pran{\frac{2^{\ell-1}-1}{2^{\ell}}}^{(m+1)/2}+\pran{\frac{2^{\ell+1}-1}{2^{\ell+2}}}^{(m+1)/2 } \\
			& < \frac{1}{2^{(m-1)/2}}\, .
\end{align*}
Provided that $2m \le k$, a Chernoff bound then gives 
\[
\p{\sum_{j=1}^{m} G_j > k} = \p{\mathrm{Bin}(k,1/2) < m} \le \exp\pran{-\frac{(k-2m)^2}{2k}}\, .
\]
By Lemma~\ref{lem:geom2} we obtain that 
\[
\psub{x}{\sum_{j=1}^{M(\ell)} (\tau_{i(\ell,j)+1}-\tau_{i(\ell,j)}) \ge b\cdot n_{\ell}} \le \frac{1}{2^{(m-1)/2}} + \exp\pran{-\frac{(k-2m)^2}{2k}}\, .
\]
Choose $m \in [b/9+1,b/9+2]$; using that $b \ge 18$, straightforward arithmetic shows that the sum on the right is then  
bounded by $2^{-b/18} + e^{-b/18} \le 2\cdot2^{-b/18}$. 
This completes the proof when $b$ is integer. For general $b$, the same argument yields the bound 
$2^{-\lfloor b \rfloor/18}+e^{-\lfloor b \rfloor/18}$; but this is still less than $2 \cdot 2^{-b/18}$. 
\end{proof}
The next corollary provides a cleaner probability tail bound for $H(s)$. For a sequence $\rb=(b_\ell,0 \le \ell \le m)$ of positive real numbers, let
\[V(\rb) = 36 \sum_{0 \le \ell \le m} \frac{b_\ell n_\ell}{2^\ell}\, \quad \text{and}\quad
\Delta(\rb) = 4\sum_{0 \le \ell \le m} 2^{-\ell-b_\ell}
\, .
\] 
\begin{cor}\label{cor:genericbound}
Fix a positive integer $m$ and positive real numbers $\rb=(b_m,0 \le \ell \le m)$. Then for any positive integer $s$, 
\[
\psub{1}{ H(s) > V(\rb) + \frac{s}{2^{m-1}}} \le \Delta(\rb).
\]
\end{cor}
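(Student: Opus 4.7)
The plan is to decompose $H(s)$ along scales and split at $m$: write $H(s)=\sum_{\ell=0}^{m} H_\ell(s)+\sum_{\ell>m} H_\ell(s)$, handle the first sum probabilistically via Theorem~\ref{thm:NL_bd} together with a union bound, and handle the second sum deterministically using that $1/S_t$ is small at high scales. Theorem~\ref{thm:NL_bd} does essentially all the probabilistic work; the rest is bookkeeping.

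For the deterministic tail, I use the definition of the scale-change times from Section~\ref{sec:scale}: on the interval $\tau_i\le t<\tau_{i+1}$ the walk remains in $[2^{L_i-1},2^{L_i+2})$, and hence $\Lambda(t)=\ell$ forces $S_t\ge 2^{\ell-1}$. For $\ell>m$ this gives $S_t\ge 2^m$, so
\[
\sum_{\ell>m} H_\ell(s)\;\le\;\frac{\#\{t<\min(s,\sigma):\Lambda(t)>m\}}{2^m}\;\le\;\frac{s}{2^m}\;\le\;\frac{s}{2^{m-1}}.
\]

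For the probabilistic part, I apply Theorem~\ref{thm:NL_bd} from $z=1$ with parameter $b=18 b_\ell$ for each $\ell\in\{0,1,\dots,m\}$; since $H_\ell(s)\le H_\ell$ the bound transfers to $H_\ell(s)$, giving
\[
\psub{1}{H_\ell(s)\ge 36\,b_\ell\, n_\ell/2^\ell}\;\le\;\min(1,\,2^{1-\ell})\cdot 2^{\,1-b_\ell}\;\le\;4\cdot 2^{-\ell-b_\ell},
\]
with equality in the last step for $\ell\ge 1$ and a factor of two to spare at $\ell=0$. A union bound over $\ell=0,\dots,m$ produces an event of probability at most $4\sum_{\ell=0}^m 2^{-\ell-b_\ell}=\Delta(\rb)$, off of which $\sum_{\ell=0}^m H_\ell(s)<V(\rb)$. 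Adding the deterministic estimate finishes the proof.

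The argument has no real obstacle; the only subtlety is matching constants. The choice $b=18 b_\ell$ in Theorem~\ref{thm:NL_bd} is forced so that the threshold $b\cdot n_\ell/2^{\ell-1}$ equals $36\,b_\ell n_\ell/2^\ell$, and the factor $4$ (rather than $2$) in front of the sum in $\Delta$ is forced by the fact that $\min(1,2^{1-\ell})$ equals $1$ rather than $2^{1-\ell}$ at $\ell=0$. Once these constants are chosen, the rest is the union bound above.
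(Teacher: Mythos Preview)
Your proof is correct and follows essentially the same approach as the paper: decompose $H(s)$ into scales, bound $\sum_{\ell>m}H_\ell(s)$ deterministically by $s/2^{m-1}$ using $S_t\ge 2^{\ell-1}$ when $\Lambda(t)=\ell$, and apply Theorem~\ref{thm:NL_bd} with $b=18b_\ell$ at $z=1$ together with a union bound for the remaining scales. Your tracking of the constants (in particular the role of $\ell=0$ in producing the factor $4$ in $\Delta(\rb)$) is accurate and matches the paper.
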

\begin{proof}
 By Theorem~\ref{thm:NL_bd}, we have that for any $b > 0$, 
\[
\psub{1}{H_\ell(\sigma) \ge 36 b \cdot \frac{n_\ell}{2^{\ell}}} \le 4 \cdot 2^{-\ell-b}. 
\]
Now fix $s$, $m$, and $\rb=(b_\ell,0 \le \ell \le m)$ as in the statement of the corollary. Then 
\[
\sum_{\ell > m} H_\ell(s) \le \frac{s}{2^{m-1}}\, , 
\]
so $H(s) \le \sum_{\ell \le m} H_{\ell}(s) +s/2^{m-1}$, and therefore 
\[
\psub{1}{ H(s) > V(\rb) + \frac{s}{2^{m-1}}} \le 
\sum_{\ell \le m} 
\psub{1}{H_\ell(s) > 36\frac{b_\ell n_\ell}{2^\ell}} 
\le \Delta(\rb)\, .  
\]
\end{proof}
In the next section we use Corollary~\ref{cor:genericbound} to derive upper bounds for probabilities of the form $\psub{1}{H(\sigma) \ge x f(\sigma)}$, under a range of assumptions on the step size distribution. The intuition behind all these results is that if $s \le \sigma$ then the dominant 
contribution to the sum $H(s)$ should typically come from the largest scale reached by the walk before hitting zero. In other words, if $s \le \sigma$ then, writing $g(s) = \sup\{\ell: n_{\ell} \le s\}$, we expect that $H(s)$ is typically around $s/2^{g(s)}$. 

\section{Relating $\sigma$ and $H(\sigma)$}\label{key_results}
\begin{thm}\label{thm:hvol_finvar}
There exists an absolute constant $C^*$ such that the following holds. 
Fix an integer random variable $X$ with $\p{X \ge -1}=1$ and $\e{X} \le 0$, and let $(S_t,t \ge 0)$ be a random walk with step distribution $X$. Then writing $p_0=\p{X=0}$, for all $x \ge 1$, 
\[
\psub{1}{H(\sigma) \ge C^* x \cdot \frac{\sigma^{1/2}}{(1-p_0)^{1/2}}} \le e^{-x^2}\, .
\]
\end{thm}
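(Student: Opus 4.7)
The plan is to reduce to Corollary~\ref{cor:genericbound} via a dyadic decomposition of $\sigma$, then choose the parameters $m$ and $\rb$ in the corollary so that the resulting bounds sum to the Gaussian tail $e^{-x^2}$.

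Since $H(\sigma) = H(2^{k+1})$ and $\sigma^{1/2} \ge 2^{k/2}$ on $\{\sigma \in [2^k,2^{k+1})\}$, writing $T_k := C^* x \cdot 2^{k/2}/(1-p_0)^{1/2}$ a union bound gives
\[
\psub{1}{H(\sigma) \ge C^* x \frac{\sigma^{1/2}}{(1-p_0)^{1/2}}} \le \sum_{k \ge 0} \psub{1}{H(2^{k+1}) \ge T_k}\, .
\]
The trivial bound $H(s) \le s$ kills every term with $2^{k+1} < T_k$, so for $x \ge 1$ and $C^*$ sufficiently large the sum effectively runs over $k \ge k_0$, where $2^{k_0} \asymp (C^*x)^2/(1-p_0)$.

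For $k \ge k_0$ I would apply Corollary~\ref{cor:genericbound} with $s = 2^{k+1}$, taking the integer $m_k \ge 1$ so that $2^{m_k} \asymp 2^{k/2+3}(1-p_0)^{1/2}/(C^* x)$, and weights $b_\ell^{(k)} := c_1 x^2 + 2(m_k-\ell)$ for $\ell \in \{0,\ldots,m_k\}$. The choice of $m_k$ ensures $2^{k+1}/2^{m_k-1} \le T_k/2$. Using Lemma~\ref{lem:nl_bound} (applied with $K = \ell+2$ and monotonicity of exit times) to get $n_\ell \le \tilde C\cdot 4^\ell/(1-p_0)$, summation of the arithmetico-geometric series gives
\[
V(\rb^{(k)}) \le \frac{36 \tilde C}{1-p_0} \sum_{\ell=0}^{m_k} \bigl(c_1 x^2 + 2(m_k - \ell)\bigr) 2^\ell \le \frac{144 \tilde C c_1 x^2}{1-p_0} \cdot 2^{m_k} \asymp \frac{x \cdot 2^{k/2}}{(1-p_0)^{1/2}}\, ,
\]
which is $\le T_k/2$ when $C^*$ is sufficiently large in terms of $\tilde C$ and $c_1$. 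Simultaneously,
\[
\Delta(\rb^{(k)}) = 4 \sum_{\ell=0}^{m_k} 2^{-\ell - b_\ell^{(k)}} = 4 \cdot 2^{-c_1 x^2 - 2 m_k} \sum_{\ell=0}^{m_k} 2^\ell \le 8 \cdot 2^{-c_1 x^2 - m_k}\, .
\]

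Summing over $k \ge k_0$ and using $m_k \ge (k-k_0)/2 + O(1)$ yields $\sum_{k \ge k_0}\Delta(\rb^{(k)}) \lesssim 2^{-c_1 x^2}$, which is at most $e^{-x^2}$ once $c_1$ exceeds $1/\log 2$ by a constant margin. The main obstacle is the joint tuning of $m_k$ and the weight profile $b_\ell^{(k)}$: the linear slope $2(m_k-\ell)$ keeps $V(\rb^{(k)})$ at the scale of a single geometric term rather than inflating by a factor of $m_k$, while the additive $c_1 x^2$ delivers the Gaussian factor $2^{-c_1 x^2}$ in $\Delta$. A weight sequence constant in $\ell$ would only yield a sub-exponential tail in $x$, not a Gaussian one, so the tilted profile is essential.
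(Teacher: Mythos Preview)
Your approach is correct and structurally identical to the paper's: both use the dyadic decomposition of $\sigma$, bound $n_\ell$ via Lemma~\ref{lem:nl_bound}, and invoke Corollary~\ref{cor:genericbound} with a carefully chosen weight profile $\rb$, then sum over scales. The only genuine difference is the shape of the weight profile. You take the \emph{linear} tilt $b_\ell = c_1 x^2 + 2(m_k-\ell)$, whereas the paper takes the \emph{geometric} tilt $b_\ell = (1-p_0)4^c\cdot 2^{(m-\ell)/2}$ (with the identification $x^2 = (1-p_0)4^c$). Both choices satisfy $b_m \asymp x^2$ and grow as $\ell$ decreases; both yield $V(\rb)\lesssim x^2 2^m/(1-p_0)$ and $\Delta(\rb)\lesssim 2^{-m-x^2}$, which is exactly what is needed. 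Your linear profile is arithmetically slightly simpler here; the paper's square-root profile has the advantage of extending uniformly to the stable case (Theorem~\ref{thm:stable_attempt}), where the exponent $1/2$ becomes $(\alpha-1)/2$.

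One small correction to your closing remark: a constant-in-$\ell$ weight profile fails not because it gives a sub-exponential tail in $x$, but because $\Delta(\rb)$ then loses its $2^{-m}$ factor, so the sum over the dyadic blocks in $k$ diverges. The tilt is indeed essential, and for the reason you identify in the $V$-computation---it keeps $V(\rb)$ at the order of a single geometric term---but the symptom of omitting it is divergence of the outer sum, not a weaker tail exponent.
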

\begin{proof}
Fix $c > 0$ and $m > 0$. For $0 \le \ell \le m$ let $b_\ell = (1-p_0)4^c \cdot 2^{(m-\ell)/2}$, and write $\rb=(b_\ell,0 \le \ell \le m)$. 
By Lemma~\ref{lem:nl_bound} we have $n_{\ell} \le C \cdot 4^{\ell}/(1-p_0)$, so 
\[
V(\rb) = 36 \sum_{0 \le \ell \le m} \frac{b_\ell n_\ell}{2^{\ell} }
= 36C 4^c \sum_{0 \le \ell \le m} 2^\ell 2^{(m-\ell)/2} < 
C'4^c 2^m\, ,
\]
for some absolute constant $C'> 1$. 

It is straightforward to check that for all $x \ge 1$, $\sum_{\ell \ge 0} 2^{\ell-x2^{\ell/2}} < 6\cdot 2^{-x}$. Thus, provided $4^c(1-p_0) \ge 1$, we also have 
\begin{align*}
\Delta(\rb)	& = 4 \sum_{0 \le \ell \le m} 2^{-\ell-b_\ell}
		 = \frac{4}{2^m} \sum_{0 \le \ell \le m} 
		2^{m-\ell - (1-p_0)4^c 2^{(m-\ell)/2}}\\
		& = \frac{4}{2^m} \sum_{0 \le \ell \le m}
		2^{\ell-4^c(1-p_0)2^{\ell/2}} < \frac{24}{2^m} \cdot 2^{-4^c(1-p_0)}\,.
\end{align*}
 It follows by Corollary~\ref{cor:genericbound} that if $4^c(1-p_0) \ge 1$ then for any $s>0$, 
\[
\psub{1}{H(s) \ge C' 4^c 2^m + \frac{s}{2^{m-1}}} \le \frac{24}{2^m} \cdot 2^{-4^c(1-p_0)}\, .
\]
Taking $s = 4^{m+c}$ gives 
$C'4^c2^m+s/2^m < (C'+2) 4^c 2^m = (C'+2) 2^c s^{1/2}$. On the event that $\sigma \in (4^{m+c-1},4^{m+c}]$, we have $H(s) = H(\sigma)$ and $2\sigma^{1/2} \ge s^{1/2}$, so this yields 
\[
\psub{1}{H(\sigma) \ge (C'+2) 2^{c+1} \sigma^{1/2}, \sigma \in (4^{m+c-1},4^{m+c}]} \le 
\frac{24}{2^m} \cdot 2^{-4^c(1-p_0)}\, .
\]
Moreover, if $\sigma \le 4^{c}$ then 
$2^{c+1}\sigma^{1/2} \ge \sigma$ so 
\[
\psub{1}{H(\sigma) \ge (C'+2) 2^{c+1} \sigma^{1/2}, \sigma \le 4^{c}} = 0\, .
\]
Using the two preceding bounds, we obtain that 
whenever $4^c(1-p_0) \ge 1$, 
\begin{align*}
\psub{1}{H(\sigma) \ge (C'+2) 2^{c+1} \sigma^{1/2}}
& = \sum_{m \ge 1} 
\psub{1}{H(\sigma) \ge (C'+2) 2^{c+1} \sigma^{1/2}, \sigma \in (4^{m+c-1},4^{m+c}]} \\
& \le 24 \cdot 2^{-4^c(1-p_0)}\, .
\end{align*}
Taking $x=2^c(1-p_0)^{1/2}\ge 1$, the above bound becomes 
\[
\psub{1}{H(\sigma) \ge 2(C'+2) x \cdot \frac{\sigma^{1/2}}{(1-p_0)^{1/2}}} \le 24 \cdot 2^{-x^2}\, ,
\]
from which the theorem follows easily. 
\end{proof} 

\begin{thm}
\label{thm:var_precise}
There exists an absolute constant $C$ such that the following holds. Fix an integer random variable $X$ with $\p{X \ge -1}=1$ and with $\e{X} \le 0$ and $\E{(X-\e{X})^2} = v \in (0,\infty)$, and let $(S_t,t \ge 0)$ be a random walk with step distribution $X$. 
Then there exists $x_0 > 0$ depending only on the law of $X$ such that for all $x \ge x_0$ and all $s \ge 1$, 
\[
\psub{1}{H(\sigma) \ge x\sigma^{1/2},\sigma \ge s} 
\le \frac{Cx}{s^{1/2}} e^{-vx^2/C}\, .
\]
\end{thm}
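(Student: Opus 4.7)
The plan is to rerun the proof of Theorem~\ref{thm:hvol_finvar}, making two modifications: sharpening the bound on $n_\ell$ so that the denominator becomes $v$ instead of $(1-p_0)$, and restricting the summation over the scale parameter $m$ that appears at the end of that proof so as to exploit the hypothesis $\sigma \ge s$ and extract the additional factor $x/s^{1/2}$.

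First I would strengthen Lemma~\ref{lem:nl_bound} to give $n_\ell \le C \cdot 4^\ell/v$ for all $\ell \ge \ell_0$, where $\ell_0 = \ell_0(\mu)$ is chosen so that $\E{(X_1-X_2)^2 \I{|X_1-X_2| \le 2^\ell}} \ge v$ for all $\ell \ge \ell_0$. Such $\ell_0$ exists because this expectation converges to $2\V{X} = 2v$ as $\ell \to \infty$ by dominated convergence. Plugging this into the first bound of Theorem~\ref{thm:kesten} yields $Q(S_t,2^\ell) \le C \cdot 2^\ell/(tv)^{1/2}$, and repeating the case analysis of Lemma~\ref{lem:nl_bound} (mean-zero Chebyshev alternative and exit bound) then delivers the claim. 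For the small scales $\ell < \ell_0$ I would fall back on a crude $\mu$-dependent bound on $n_\ell$; the resulting contribution to $V(\rb)$ below is a bounded $\mu$-dependent additive constant, which is absorbed by inflating the eventual threshold $x_0$.

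With $b_\ell := v \cdot 4^c \cdot 2^{(m-\ell)/2}$, the arithmetic in the proof of Theorem~\ref{thm:hvol_finvar} carries over verbatim with $(1-p_0)$ replaced by $v$: one obtains $V(\rb) \le C' 4^c 2^m$ and $\Delta(\rb) \le (24/2^m) \cdot 2^{-4^c v}$, valid whenever $4^c v \ge 1$. Applying Corollary~\ref{cor:genericbound} at time $4^{m+c}$, and reusing the observation that the event $\{H(\sigma) \ge (C'+2) 2^{c+1}\sigma^{1/2}\}$ is empty on $\{\sigma \le 4^c\}$, I get, for every integer $m \ge 1$,
\begin{equation*}
\psub{1}{H(\sigma) \ge (C'+2)\, 2^{c+1}\, \sigma^{1/2},\, \sigma \in (4^{m+c-1},4^{m+c}]} \le \frac{24}{2^m} \cdot 2^{-4^c v}.
\end{equation*}

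To finish, set $x := (C'+2)\, 2^{c+1}$ and sum only over those $m \ge 1$ with $4^{m+c} \ge s$. Because the summand is geometric in $m$, the tail starting at $m_0 := \max(1,\lceil \log_4 s - c\rceil)$ is at most $48 \cdot (2^c/s^{1/2}) \cdot 2^{-4^c v}$ when $s > 4^c$, and at most $48 \cdot 2^{-4^c v}$ when $s \le 4^c$. In both cases this has the claimed form $C x\, s^{-1/2}\, e^{-vx^2/C}$, since $2^c \asymp x$ and $4^c v \asymp vx^2$, and in the second case $x/s^{1/2} \ge x/2^c$ is bounded below by a positive absolute constant. The admissible $x_0$ is then the larger of $2(C'+2)/v^{1/2}$ (forcing $4^c v \ge 1$) and a $\mu$-dependent constant controlling the $\ell < \ell_0$ correction. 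The main obstacle I anticipate is precisely this first step: the bound $n_\ell \le C \cdot 4^\ell/v$ with the optimal $v$-dependence holds only asymptotically in $\ell$, and the rate at which $\E{(X_1-X_2)^2 \I{|X_1-X_2| \le 2^\ell}}$ approaches $2v$ is irreducibly $\mu$-dependent, which is exactly what forces $x_0$ to depend on the law of $X$.
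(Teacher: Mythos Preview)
Your plan is essentially the paper's: use Kesten's first inequality to get $n_\ell\le C\cdot 4^\ell/v$ for $\ell\ge k_0$, rerun the $V(\rb),\Delta(\rb)$ arithmetic of Theorem~\ref{thm:hvol_finvar} with $v$ in place of $1-p_0$, and then sum only over scales meeting $[s,\infty)$. Two small corrections. First, you do not need to redo the case analysis of Lemma~\ref{lem:nl_bound}; once $\E{(X_1-X_2)^2\I{|X_1-X_2|\le 2^k}}\ge v/4$, the first bound in Theorem~\ref{thm:kesten} gives $Q(S_n,2^{k+2})\le 2^{k+3}/(vn)^{1/2}$ directly, hence $n_k\le 4^{k+4}/v$. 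Second, the small-scale contribution to $V(\rb)$ is not a bounded additive constant: since $b_\ell$ carries the factor $2^{(m-\ell)/2}$, that piece is of order $2^{(m+k_0)/2}/(1-p_0)$, and it is dominated by the main term $2^m/v$ only when $m\ge k_0+2\log(v/(1-p_0))$. The paper imposes exactly this threshold on $m$ (its $m_0$) rather than absorbing the issue into $x_0$ alone.
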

\begin{proof}
Observe that 
\begin{align*}
\E{(X_1-X_2)^2} & = \E{\E{(X_1-X_2)^2|X_2}} \\
& \ge \E{\E{(X_1-\e X_1)^2|X_2}} = \E{(X-\e X)^2} = v\, .
\end{align*}
By monotone convergence, we may thus choose $k_0>0$ such that 
\[
\E{(X_1-X_2)^2\I{|X_1-X_2| \le 2^k}} \ge v/4
\]
for all $k \ge k_0$.
The first bound of Theorem~\ref{thm:kesten} yields that for all $k \ge k_0$ we have $Q(S_n,2^k) \le 2^{k+1}/(v n)^{1/2}$.
Writing $\tau_k = \inf\{t: S_t \not \in [2^{k-1},2^{k+2})\}$, it follows that for all $x \in [2^{k-1},2^{k+2})$, if $n \ge 4^{k+4}/v$ then 
\[
\psub{x}{\tau_k \ge n} \le \psub{x}{S_n \in [2^{k-1},2^{k+2})} \le Q(S_n,2^{k+2}) \le \frac{2^{k+3}}{(vn)^{1/2}} \le \frac{1}{2}\, .
\]
We thus have $n_k \le 4^{k+4}/v$ for all $k \ge k_0$. For smaller $k$ we use the bound $n_k \le C \cdot 4^{k}/(1-p_0)$ of Lemma~\ref{lem:nl_bound}. 

Now fix $B\ge 1$ and $m \ge k_0$, and for $0 \le \ell \le m$ let $b_\ell = 2^{(m-\ell)/2}B$. 
Then with $\rb = (b_{\ell},0 \le \ell \le m)$, in the notation of Corollary~\ref{cor:genericbound} we have 
\begin{align*}
V(\rb) & = 36 B\sum_{0 \le \ell \le m} \frac{b_\ell n_\ell}{2^\ell} \\
	& \le 36 B \bigg(\frac{C}{1-p_0}\sum_{0 \le \ell < k_0} 2^{(m+\ell)/2}+ \frac{4^4}{v} \sum_{k_0 \le \ell \le m} 2^{(m+\ell)/2}\bigg). \\
	& \le C'B \left(\frac{2^{(m+k_0)/2}}{(1-p_0)} + \frac{2^m}{v} \right)\, .
\end{align*}
Provided $m \ge k_0 + 2\log(v/(1-p_0)))$, the final quantity is at most $C'B2^{m+1}/v$. Moreover, by reprising the bound on $\Delta(\rb)$ from Theorem~\ref{thm:hvol_finvar}, we obtain that $\Delta(\rb) \le 24/2^{m+B}$. 
Letting $m_0 = k_0 + 2\log(v/(1-p_0))$, it follows by Corollary~\ref{cor:genericbound} that for $m \ge m_0$, for all $t > 0$ we have 
\begin{equation}\label{eq:intermediate}
\psub{1}{H(s) \ge C'B\frac{2^{m+1}}{v} + \frac{t}{2^{m-1}}} \le \frac{24}{2^{m+B}}\, .
\end{equation}

Next, fix $x \ge 4^{m_0}$, and for $i \ge 0$ let 
$s_i = x^2 \cdot 4^i$. 
Then for any $j \ge 0$ we may write 
\begin{align*}
\psub{1}{H(\sigma) \ge x\sigma^{1/2},\sigma \ge s_j} & = \sum_{i \ge j} \psub{1}{H(\sigma) \ge x\sigma^{1/2}, \sigma \in [s_i,s_{i+1})} \\
	& \le \sum_{i \ge j} \psub{1}{H(s_{i+1}) \ge x s_i^{1/2}}\, .
\end{align*}
To bound the final summands, take $t=s_i$ and $m = i$, and $B=vx^2/C'$ in (\ref{eq:intermediate}). Then $t/2^{m-1}=2x t^{1/2}$ and 
\[
C'B \frac{2^{m+1}}{v} = x^2 \cdot 2^{m+1} = 2xt^{1/2}\, ,
\]
and $2^{m} = t^{1/2}/x$, so (\ref{eq:intermediate}) implies that  
\[
\psub{1}{H(s_i) \ge 4xs_i^{1/2}} \le \frac{24x}{s_i^{1/2}}2^{-vx^2/C'}\, .
\]
and summing over $i \ge j$ yields that 
\[
\psub{1}{H(\sigma) \ge 4x\sigma^{1/2},\sigma \ge s_j} \le \frac{48x}{s_j^{1/2}} \cdot  2^{-vx^2/C'}\,, . 
\]
For any $s \ge 4^{m_0}$ there is $j \ge 0$ such that $s \in [s_j,s_{j+1})$. For this value of $j$ we also have $2^{-(m_0+j)} \le xs^{-1/2}$, so using the preceding bound we obtain 
\begin{align*}
\psub{1}{H(\sigma) \ge 4x\sigma^{1/2},\sigma \ge s}& \le 
\psub{1}{H(\sigma) \ge 4x\sigma^{1/2},\sigma \ge s_j} \\
& \le \frac{96x}{s^{1/2}} 2^{-vx^2/C'}\, .
\end{align*}
Finally, if $\sigma < x^2$ then $H(\sigma) \le \sigma < x\sigma^{1/2}$, so 
\[
\psub{1}{H(\sigma) \ge 4x\sigma^{1/2}} = 
\psub{1}{H(\sigma) \ge 4x\sigma^{1/2}, 
\sigma \ge x^2} \le 96 \cdot 2^{-vx^2/C'}\, . \qedhere
\] 
\end{proof}

\begin{thm}\label{thm:stable_attempt}
There exists an absolute constant $C^*$ such that the following holds. Fix $\alpha \in (1,2]$ and write $M=M(\alpha) = \sup_{\ell \ge 0} n_\ell/2^{\alpha \ell} \le \infty$.  Then for any real $v \ge 0$ we have 
\[
\psub{1}{H(\sigma) > \frac{C^*M}{(\alpha-1)} \cdot v \cdot \sigma^{(\alpha-1)/\alpha}}
 \le e^{-v^{\alpha}}\, .
\]
\end{thm}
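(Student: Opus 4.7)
The plan is to apply Corollary~\ref{cor:genericbound} with a specific choice of $\rb$ that exploits the tail bound $n_\ell \le M \cdot 2^{\alpha\ell}$, and then sum the resulting estimates over dyadic windows of $\sigma$, following the same template as Theorems~\ref{thm:hvol_finvar} and~\ref{thm:var_precise}. For a positive integer $m$ (to be chosen as a function of $v$ and the window of $\sigma$), take $b_\ell = v^\alpha + 2(m-\ell)$ for $0 \le \ell \le m$. The $2(m-\ell)$ term is what will produce an extra geometric factor $2^{-m}$ in $\Delta(\rb)$, enabling a summable union bound over the windows of $\sigma$.

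A direct computation gives
\[
\Delta(\rb) = 4 \cdot 2^{-v^\alpha - 2m} \sum_{\ell=0}^m 2^\ell \le 8 \cdot 2^{-v^\alpha - m},
\]
while the assumption $n_\ell \le M \cdot 2^{\alpha\ell}$ combined with the estimates $2^{\alpha-1} - 1 \ge (\alpha-1)\log 2$ (for the geometric in $\ell$) and $\sum_{k \ge 0} k \cdot 2^{-(\alpha-1)k} = O(1/(\alpha-1)^2)$ (for the contribution of $2(m-\ell)$) yields
\[
V(\rb) = 36 \sum_{\ell=0}^m b_\ell \cdot \frac{n_\ell}{2^\ell} \le \frac{C_1 M \cdot 2^{(\alpha-1)m}}{\alpha-1}\left(v^\alpha + \frac{1}{\alpha-1}\right),
\]
which collapses to $\frac{C_2 M v^\alpha \cdot 2^{(\alpha-1)m}}{\alpha-1}$ provided $v^\alpha \ge 1/(\alpha-1)$.

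Now define $s_m := \frac{Mv^\alpha}{\alpha-1} \cdot 2^{\alpha m}$. This choice balances $V(\rb)$ and $s_m/2^{m-1}$; both come out of order $M^{1/\alpha} v \, s_m^{(\alpha-1)/\alpha}/(\alpha-1)^{1/\alpha}$, and the identity $M^{(\alpha-1)/\alpha} \ge (\alpha-1)^{(\alpha-1)/\alpha}$ (valid because $M \ge n_0 \ge 1$ and $\alpha-1 \le 1$) converts this into the desired $\frac{C_3 Mv s_m^{(\alpha-1)/\alpha}}{\alpha-1}$. Corollary~\ref{cor:genericbound} then delivers
\[
\psub{1}{H(s_m) \ge \tfrac{C_3 Mv s_m^{(\alpha-1)/\alpha}}{\alpha-1}} \le 8 \cdot 2^{-v^\alpha - m}.
\]
On $\{\sigma \in [s_{m-1},s_m)\}$ we have $H(\sigma) \le H(s_m)$ and $\sigma^{(\alpha-1)/\alpha} \ge s_m^{(\alpha-1)/\alpha}/2^{\alpha-1}$, so inflating $C^*$ by a factor of at most $2$ transfers the same bound to $\psub{1}{H(\sigma) \ge \tfrac{C^* Mv \sigma^{(\alpha-1)/\alpha}}{\alpha-1}, \sigma \in [s_{m-1},s_m)}$. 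On $\{\sigma < s_0\}$ the trivial bound $H(\sigma) \le \sigma$ together with $s_0 \le (C^*Mv/(\alpha-1))^\alpha$ (a consequence of $M, C^* \ge 1$ and $\alpha-1 \le 1$) shows the event is empty. Summing over $m \ge 1$ gives a total of $8 \cdot 2^{-v^\alpha}$, and a final enlargement of $C^*$ (equivalently, rescaling $v$) replaces $2^{-v^\alpha}$ by $e^{-v^\alpha}$ once $v$ is bounded below by an absolute constant; the remaining small-$v$ regime is trivial because $e^{-v^\alpha}$ is bounded below in that regime.

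The main obstacle is the balance of exponents in the middle step: the natural optimization produces a threshold of order $M^{1/\alpha} v/(\alpha-1)^{1/\alpha}$ rather than the desired $Mv/(\alpha-1)$, and bridging this gap requires the two cheap observations that $M \ge 1$ and $\alpha - 1 \le 1$. A secondary technical nuisance is the $1/(\alpha-1)^2$ contribution to $V(\rb)$ arising from the linear $2(m-\ell)$ piece of $b_\ell$, which needs to be absorbed into the $v^\alpha$ term and thus dictates the effective lower bound on $v$ below which the statement is handled trivially.
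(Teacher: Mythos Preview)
Your strategy is the paper's: apply Corollary~\ref{cor:genericbound} with a weight sequence $\rb$, translate the resulting bound to dyadic windows in $\sigma$, and sum. The only real difference is your choice $b_\ell = v^\alpha + 2(m-\ell)$ versus the paper's $b_\ell = x\cdot 2^{(m-\ell)(\alpha-1)/2}$. Both choices produce $V(\rb)$ of order $\tfrac{M}{\alpha-1}\,v^\alpha\,2^{(\alpha-1)m}$ and $\Delta(\rb)$ of order $2^{-v^\alpha-m}$; your linear tail in $b_\ell$ costs you the extra $1/(\alpha-1)^2$ term in $V(\rb)$, which you then absorb by imposing $v^\alpha \ge 1/(\alpha-1)$, whereas the paper's exponential tail avoids that but instead needs $x > 6/(\alpha-1)$ for the $\Delta$ estimate. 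The net constraint is the same. Your observation that $(M/(\alpha-1))^{1/\alpha} \le M/(\alpha-1)$ via $M\ge 1$ and $\alpha-1\le 1$ is correct and cleanly bridges the exponent mismatch; the window argument and the $\sigma<s_0$ case are also fine.

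The one genuine gap is your last sentence. You claim the small-$v$ regime is ``trivial because $e^{-v^\alpha}$ is bounded below,'' but the lower bound on $v$ coming from your own argument is $v^\alpha \ge 1/(\alpha-1)$, which is \emph{not} an absolute constant. For $\alpha$ close to $1$ there is a whole range, say $v^\alpha \in [7,\,1/(\alpha-1)]$, where your main bound is unavailable yet $e^{-v^\alpha}$ is as small as $e^{-1/(\alpha-1)}$, so nothing is trivial there. Rescaling $v$ (enlarging $C^*$) pushes the threshold down to $v^\alpha \ge 1/(\lambda^\alpha(\alpha-1))$, but for fixed $\lambda$ the $\alpha$-dependence persists. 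To be fair, the paper's proof has exactly the same loose end---it imposes $x>6/(\alpha-1)$ and then says ``from which the result follows easily''---so this is a shared omission rather than a defect of your route relative to the paper's. But your specific justification for the small-$v$ case is incorrect as written.
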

\begin{proof}
Fix $c > 0$ and an integer $m > 0$. For $0 \le \ell \le m$ let $b_{\ell} = x 2^{(m-\ell)(\alpha-1)/2}$, and write $\rb = (b_{\ell},0 \le \ell \le m)$. By assumption we have $n_{\ell} \le M 2^{\alpha \ell}$, so 
\[
V(\rb) = 
 \sum_{0 \le \ell \le m} \frac{b_\ell n_\ell}{2^{\ell} }
\le 
36M x \sum_{0 \le \ell \le m} 2^{(\alpha-1)\ell} 2^{(m-\ell)(\alpha-1)/2} < 
C'Mx 2^{(\alpha-1)m}\, ,
\]
where $C'>0$ depends only on $\alpha$. 
It is straightforward to check that for if $x > 6/(\alpha-1)$ then 
$\sum_{\ell \ge 0} 2^{\ell-x \cdot 2^{\ell(\alpha-1)/2}} \le 2^{1-x}$. 
Thus, provided $x > 6/(\alpha-1)$, we also have 
\begin{align*}
\Delta(\rb)	& = 4 \sum_{0 \le \ell \le m} 2^{-\ell-b_\ell}
		 = \frac{4}{2^m} \sum_{0 \le \ell \le m} 
		2^{m-\ell - x 2^{(m-\ell)(1-\alpha)/2}}\\
		& = \frac{4}{2^m} \sum_{0 \le \ell \le m}
		2^{\ell-x2^{\ell(1-\alpha)/2}} < \frac{1}{2^{x+m-3}}\, .
\end{align*}
It follows from Corollary~\ref{cor:genericbound} that for any integer $s>0$ we have
\[
\psub{1}{H(s) \ge C'Mx2^{(\alpha-1)m}+ \frac{s}{2^m}} \le 
\frac{1}{2^{x+m-3}}. 
\]
Taking $s=x\cdot 2^{\alpha m}$ gives 
$C'Mx2^{(\alpha-1)m}+ \frac{s}{2^m} = (C'+1)Mx2^{(\alpha-1)m}$. On the event that $\sigma \in [x2^{\alpha (m-1)},x2^{\alpha m}]$ 
we have $H(s)=H(\sigma)$ and $2x^{1/\alpha}\sigma^{(\alpha-1)/\alpha} \ge 2x 2^{(\alpha-1)(m-1)} \ge x2^{(\alpha-1)m}$, 
so taking $C''=2(C'+1)$ the above bound yields
\[
\psub{1}{H(\sigma) \ge C''Mx^{1/\alpha} \sigma^{(\alpha-1)/\alpha}, \sigma \in [x2^{\alpha (m-1)},x2^{\alpha m}]} 
\le \frac{1}{2^{x+m-3}}\, .
\]
Moreover, if $\sigma > x$ then $x^{1/\alpha} \sigma^{(\alpha-1)/\alpha} < \sigma$. We always have $M \ge n_1/4 \ge 1/4$, and we may assume $C'' > 4$, so it follows that 
\[
\p{H(\sigma) \ge C''Mx^{1/\alpha}\sigma^{(\alpha-1)/\alpha}, \sigma < x} \le 
\p{H(\sigma) \ge x^{1/\alpha}\sigma^{(\alpha-1)/\alpha}, \sigma < x} = 0. 
\]
Using the two preceding bounds, we obtain that whenever $x > 6/(\alpha-1)$ we have 
\begin{align*}
& \psub{1}{H(\sigma) \ge C''Mx^{1/\alpha} \sigma^{(\alpha-1)/\alpha}}\\
 & = \sum_{m \ge 1} 
\psub{1}{H(\sigma) \ge C''Mx^{1/\alpha} \sigma^{(\alpha-1)/\alpha}, \sigma \in [x2^{\alpha (m-1)},x2^{\alpha m}]} \\
& \le \frac{8}{2^x},
\end{align*}
from which the result follows easily. 
\end{proof}

\begin{prop}\label{prop:inf_var_generic}
There exists an absolute constant $C^*$ such that the following holds. 
Fix an integer random variable $X$ with $\p{X \ge -1}=1$, $\e{X} \le 0$, and $\E{X^2} = \infty$. Let $(S_t,t \ge 0)$ be a random walk with step distribution $X$. Then for all $\delta > 0$ there exists $m_0=m_0(\delta)$ such that for any integers $m \ge m_0$ and $s > 0$, and any real $B \ge 1$, 
\[
\psub{1}{H(s) \ge \delta B \cdot 2^m + \frac{s}{2^{m-1}}} \le \frac{1}{2^{m+B}}. \, .
\]
\end{prop}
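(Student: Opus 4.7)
The plan is to apply Corollary~\ref{cor:genericbound} with $b_\ell := B + 3 + 2(m - \ell)$ for $0 \le \ell \le m$. The key new input beyond Theorems~\ref{thm:hvol_finvar} and~\ref{thm:var_precise} is Corollary~\ref{cor:infvar}: since $\E{X^2} = \infty$, applying that corollary with threshold $1/2$ gives that for any $\eta > 0$ there is $\ell_1 = \ell_1(\eta)$ such that $n_\ell \le \eta \cdot 4^\ell$ for all $\ell \ge \ell_1$. (Indeed, for $x \in [2^{\ell-1}, 2^{\ell+2})$ the exit time from $[2^{\ell-1}, 2^{\ell+2})$ is dominated by $\sigma(\ell+2)$, so Corollary~\ref{cor:infvar} with $\eps = \eta/16$ yields the bound once $\ell$ is large.)

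A direct calculation yields
\[
\Delta(\rb) \;=\; 4 \sum_{\ell=0}^m 2^{-\ell - b_\ell} \;=\; 4 \cdot 2^{-B-3-2m} \sum_{\ell=0}^m 2^\ell \;\le\; 4 \cdot 2^{-B-3-2m} \cdot 2^{m+1} \;=\; 2^{-m-B},
\]
so by Corollary~\ref{cor:genericbound} it suffices to show $V(\rb) \le \delta B \cdot 2^m$ for all $m \ge m_0(\delta)$. Split $V(\rb) = 36 \sum_{\ell=0}^m b_\ell n_\ell / 2^\ell$ at $\ell_1 = \ell_1(\eta)$ with $\eta := \delta/864$. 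For $\ell \ge \ell_1$, using $n_\ell \le \eta \cdot 4^\ell$ and substituting $k = m-\ell$,
\[
36 \sum_{\ell = \ell_1}^m \frac{b_\ell n_\ell}{2^\ell} \;\le\; 36 \eta \cdot 2^m \sum_{k \ge 0} (B + 3 + 2k) 2^{-k} \;=\; 36 \eta (2B + 10) \cdot 2^m \;\le\; \tfrac{\delta}{2} B \cdot 2^m,
\]
where the last step uses $B \ge 1$ to get $2B + 10 \le 12 B$ together with the definition $\eta = \delta/864$.

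The remaining ``head'' contribution $36 \sum_{\ell < \ell_1} b_\ell n_\ell / 2^\ell$ is at most $36(B+3+2m) D$, where $D := \sum_{\ell < \ell_1} n_\ell / 2^\ell$ depends only on $\delta$ and the law of $X$; since $B \ge 1$ gives $B + 3 + 2m \le B(4 + 2m)$, this is at most $36 B (4 + 2m) D$, a polynomial in $m$, and hence $\le \tfrac{\delta}{2} B \cdot 2^m$ once $m$ exceeds some threshold depending only on $D$ and $\delta$. Choosing $m_0(\delta)$ to exceed $\ell_1(\delta/864)$ and to make this last comparison valid, both estimates hold simultaneously, giving $V(\rb) \le \delta B \cdot 2^m$ and hence the claimed tail bound via Corollary~\ref{cor:genericbound}. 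The only substantive step is the passage from $\E{X^2} = \infty$ to the quantitative decay $n_\ell = o(4^\ell)$, which is exactly what Corollary~\ref{cor:infvar} supplies; everything else is bookkeeping tuned so that $\Delta(\rb) \le 2^{-m-B}$ holds exactly while $V(\rb)$ remains of order $\delta B \cdot 2^m$.
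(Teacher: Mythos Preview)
Your proof is correct and follows the same overall architecture as the paper: apply Corollary~\ref{cor:genericbound} with a suitable sequence $\rb$, using the infinite-variance hypothesis to force $n_\ell = o(4^\ell)$ so that $V(\rb)$ can be made $\le \delta B\cdot 2^m$ for large $m$.

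There are two minor technical differences worth noting. First, the paper takes $b_\ell = B\cdot 2^{(m-\ell)/2}$ (the same geometric weights as in Theorem~\ref{thm:hvol_finvar}, which lets it reuse the $\Delta(\rb)$ estimate from there), whereas you take the linear weights $b_\ell = B+3+2(m-\ell)$; your choice makes the $\Delta(\rb)$ computation a clean finite geometric sum giving exactly $2^{-m-B}$, at the cost of a slightly more delicate $V(\rb)$ bound. Second, the paper obtains $n_\ell = o(4^\ell)$ via Corollary~\ref{cor:disperse} (bounding $\E{(X_1-X_2)^2\I{|X_1-X_2|\le 2^k}}$ from below), while you go through Corollary~\ref{cor:infvar}; both are consequences of Kesten's Theorem~\ref{thm:kesten} and yield the same qualitative conclusion. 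One small point: you implicitly use that $D = \sum_{\ell < \ell_1} n_\ell/2^\ell$ is finite, which is immediate from Lemma~\ref{lem:nl_bound} but not stated; it would be worth a one-line citation.
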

\begin{proof}
In this proof we write $I_k = [2^{k-1},2^{k+2})$ and $\tau_k = \inf\{t: S_t \not \in I_k\}$. 
Fix $\eps > 0$ small, and let $k_0$ be large enough that 
$\E{X^2 \I{X \in [0,2^{k_0})}} \ge C/(\eps \p{X=-1})$, where $C$ is the constant from Corollary~\ref{cor:disperse}. Then for all $k \ge k_0$ we have $Q(S_n,2^k) \le \eps 2^k/n^{1/2}$, so for all $x \in I_{k}$, if $n \ge \eps^2 4^{k+3}$ then 
\[
\psub{x}{\tau_k \ge n} \le \psub{x}{S_n \in I_k} \le Q(S_n,2^{k+2}) \le \frac{\eps 2^{k+2}}{n^{1/2}} \le \frac{1}{2}\, .
\]
It follows that $n_k \le \eps^2 4^{k+3}$ for all $k \ge k_0$. For smaller $k$ we use the bound $n_k \le C \cdot 4^{k}/(1-p_0)$ of Lemma~\ref{lem:nl_bound}. 

Now fix $B\ge 1$ and $m \ge k_0$, and for $0 \le \ell \le m$ let $b_\ell = 2^{(m-\ell)/2}B$. 
Then with $\rb = (b_{\ell},0 \le \ell \le m)$, in the notation of Corollary~\ref{cor:genericbound} we have 
\begin{align*}
V(\rb) & = 36 \sum_{0 \le \ell \le m} \frac{b_\ell n_\ell}{2^\ell} \\
	& \le C B \bigg(\frac{1}{1-p_0}\sum_{0 \le \ell < k_0} 2^{(m+\ell)/2}+ 4^c\eps^2 \sum_{k_0 \le \ell \le m} 2^{(m+\ell)/2}\bigg). \\
	& \le C'B \left(\frac{2^{(m+k_0)/2}}{(1-p_0)} + \eps^2 2^m \right)\, ,
\end{align*}
where $C$ and $C'$ are absolute constants. 
Provided $m \ge k_0 + 2\log(1/(\eps^2(1-p_0)))$, the final quantity is at most $2C'B\eps^2\cdot 2^{m}$. Moreover, by reprising the bound on $\Delta(\rb)$ from Theorem~\ref{thm:hvol_finvar}, we obtain that $\Delta(\rb) \le 24/2^{m+B}$. 

Letting $m_0 = k_0 + 2\log(1/(\eps^2(1-p_0)))$, it follows by Corollary~\ref{cor:genericbound} that for $m \ge m_0$, for all $s > 0$ we have 
\[
\psub{1}{H(s) \ge 2C'B\eps^2 2^m + \frac{s}{2^{m-1}}} \le \frac{24}{2^{m+B}}\, .
\]
Taking $\eps$ small enough that $2C'\eps^2 < \delta/5$, say, the result then follows. 
\qedhere
\end{proof}

\begin{cor}\label{cor:inf_var_generic}
Under the conditions of Proposition~\ref{prop:inf_var_generic}, for any $\eps > 0$ there exists $n_0$ such that for all $x > 0$ and all $s \ge x^2n_0$, 
\[
\p{H(\sigma) \ge x\sigma^{1/2},\sigma \ge s} \le \frac{x}{s^{1/2}}e^{-x^2/\eps}. 
\]
\end{cor}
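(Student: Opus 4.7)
The plan is to mirror the structure of the proof of Theorem~\ref{thm:var_precise}, using Proposition~\ref{prop:inf_var_generic} as the quantitative input in place of the variance-dependent bound. Fix $\eps > 0$, choose $\delta = \delta(\eps) \in (0,1)$ small enough that $2^{-1/\delta} \le e^{-1/\eps}$ (any $\delta \le \eps\ln 2$ works), let $m_0 = m_0(\delta)$ be the threshold given by Proposition~\ref{prop:inf_var_generic}, and set $n_0 = 4^{m_0+1}$. Write $s_i = x^2 \cdot 4^i$ for $i \ge 0$.

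For $x \ge 1$ and $s \ge x^2 n_0$, let $j$ be the unique nonnegative integer with $s \in [s_j, s_{j+1})$; the assumption on $s$ forces $j \ge m_0$. On $\{\sigma \in [s_i, s_{i+1})\}$ we have $H(\sigma) = H(s_{i+1})$ and $\sigma^{1/2} \ge s_i^{1/2} = x\cdot 2^i$, so a union bound yields
\[
\psub{1}{H(\sigma) \ge 6x\sigma^{1/2},\, \sigma \ge s} \le \sum_{i \ge j} \psub{1}{H(s_{i+1}) \ge 6x^2 \cdot 2^i}.
\]
To each summand I would apply Proposition~\ref{prop:inf_var_generic} with $m = i+1 \ge m_0$, with its $s$-parameter set to $s_{i+1}$, and with $B = x^2/\delta \ge 1$. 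A direct computation gives
\[
\delta B \cdot 2^{i+1} + \frac{s_{i+1}}{2^{i}} = 2x^2\cdot 2^i + 4x^2\cdot 2^i = 6x^2\cdot 2^i,
\]
so the proposition bounds each summand by $2^{-(i+1) - x^2/\delta}$.

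Summing the geometric series yields $2^{-j - x^2/\delta}$. Using $2^{-j} = x/s_j^{1/2} \le 2x/s^{1/2}$ (since $s < s_{j+1} = 4s_j$), and $2^{-x^2/\delta} \le e^{-x^2/\eps}$ by the choice of $\delta$, this collapses to $(2x/s^{1/2})\,e^{-x^2/\eps}$. Renaming $x \mapsto x/6$ (and enlarging $n_0$ and $\eps$ by absolute constants) gives the stated bound for $x$ bounded below; the truly small-$x$ regime can be absorbed by noting that the desired inequality becomes essentially $\p{\sigma \ge s} \lesssim x/\sqrt{s}$, which holds by enlarging $n_0$ at the price of a constant.

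The main obstacle is the parameter calibration: the scale $m$ of the decomposition, the "budget" $B$, and the stopping time $s_{i+1}$ at which Proposition~\ref{prop:inf_var_generic} is invoked must be matched so that, scale by scale, the two terms $\delta B\cdot 2^m$ and $s/2^{m-1}$ both sit at height $\Theta(x\sigma^{1/2})$, while the failure probability $2^{-m-B}$ simultaneously produces both the $x/s^{1/2}$ prefactor (from telescoping the geometric series in $i$) and the Gaussian-type tail $e^{-x^2/\eps}$. The key insight is that coupling $B$ linearly to $x^2$ via $B = x^2/\delta$ is exactly what converts the linear-in-$B$ exponent $2^{-B}$ into the quadratic-in-$x$ Gaussian decay, paralleling the use of Chebyshev with variance $v$ in the finite-variance argument.
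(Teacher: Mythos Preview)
Your main argument for $x$ bounded below (after the rename $x\mapsto x/6$, effectively $x \ge c(\eps)$) is correct and essentially identical to the paper's: both apply Proposition~\ref{prop:inf_var_generic} with $B$ proportional to $x^2$, slice $\{\sigma\ge s\}$ into dyadic shells $\{\sigma\in[s_i,s_{i+1})\}$, and sum the resulting geometric series so that the $2^{-m}$ factor becomes $x/s^{1/2}$ and the $2^{-B}$ factor becomes $e^{-x^2/\eps}$. Your parameter bookkeeping is clean.

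The gap is in the small-$x$ regime. Your claim that ``$\p{\sigma\ge s}\lesssim x/\sqrt{s}$, which holds by enlarging $n_0$ at the price of a constant'' is not justified and, as written, cannot work. For a generic centred walk one only has $\p{\sigma\ge s}=O(s^{-1/2})$, with no dependence on $x$; enlarging $n_0$ does nothing to manufacture the extra factor of $x$ you need when $x$ is small. This is exactly where the infinite-variance hypothesis enters, and you have not used it. The paper handles $0<x<(2\eps)^{1/2}$ by a genuinely different argument: since $\E{X^2}=\infty$, Esseen's theorem gives $\sup_{z}\p{S_n=z}=o(n^{-1/2})$; the cycle lemma identity $\psub{1}{\sigma=n}=\tfrac1n\p{S_n=n-1}$ then yields $\psub{1}{\sigma\ge s}=o(s^{-1/2})$, which is strictly stronger than the generic $O(s^{-1/2})$ and is what produces the required factor of $x$. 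You need to supply this (or an equivalent) argument; the small-$x$ case is not a bookkeeping afterthought.
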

\begin{proof}
Fix fix $\eps > 0$, let $\delta=\eps/2$, and let $m_0=m_0(\delta)$ be as in Proposition~\ref{prop:inf_var_generic}. 
Now fix $x > 0$. We consider the cases $x\ge (2\eps)^{1/2}$ and $x < (2\eps)^{1/2}$ separately. 

First suppose $x\ge (2\eps)^{1/2}$. For $i \ge 0$ 
let $s_i = x^2 \cdot 4^{m_0+i}$.  
For any $j \ge 0$ and $y \ge x$ we may write
\begin{align*}
\psub{1}{H(\sigma) \ge y\sigma^{1/2},\sigma \ge s_j} & = \sum_{i \ge j} \psub{1}{H(\sigma) \ge y\sigma^{1/2}, \sigma \in [s_i,s_{i+1})} \\
	& \le \sum_{i \ge j} \psub{1}{H(s_{i+1}) \ge y s_i^{1/2}}\\
	& = \sum_{i \ge j} \psub{1}{H(s_{i+1}) \ge ys_{i+1}^{1/2}/2}\, .
\end{align*}
Taking $s=s_{i+1}$ and $m = m_0+i$, we have 
$s/2^{m-1} = 2xs^{1/2}$; setting 
$B= x^2/\delta$, we also obtain 
$\delta B\cdot 2^m = x^2 2^m = xs^{1/2}$. 
Proposition~\ref{prop:inf_var_generic} then implies that 
\[
\psub{1}{H(s_{i+1}) \ge 3 xs_{i+1}^{1/2}/2} \le 2^{-m_0-i-B}.
\]
Now take $y=3x$, sum over $i \ge j$, and use that $s_i^{1/2}/x=2^{m_0+i}$; this yields that 
\[
\psub{1}{H(\sigma) \ge 3x\sigma^{1/2},\sigma \ge s_j} \le 2^{1-m_0-j-B} = \frac{2x}{s_j^{1/2}} e^{-x^2/\delta}
\]

For any $s \ge s_0$ there is $j \ge 0$ such that $s \in [s_j,s_{j+1})$. For this value of $j$ we have $x/s^{1/2} < 2x/s_j^{1/2}$, so the using preceding bound we obtain 
\[
\psub{1}{H(\sigma) \ge 3x\sigma^{1/2},\sigma \ge s}\le 
\psub{1}{H(\sigma) \ge 3x\sigma^{1/2},\sigma \ge s_j} 
\le
\frac{4x}{s^{1/2}} e^{-x^2/\delta}\, .
\]
To conclude, recall that $\eps=2\delta$, so 
$e^{-x^2/\delta}=e^{-2x^2/\eps}$. 
Since $x \ge 2\delta^{1/2}$ we have 
$e^{-2x^2/\eps} \le e^{-x^2/\eps}/4$, and the result follows in this case.

Now suppose $0 < x < (2\eps)^{1/2}$.
Since $\E{X^2}=\infty$, 
by Theorem~4.1 of \cite{MR0231419} 
we have $\sup_{z \in \Z} \p{S_n=z} = o(n^{-1/2})$. 
We may thus choose $s_0$ large enough that for all $n \ge s_0$, 
$\sup_{z \in \Z} \p{S_n=z} \le x n^{-1/2}/4$. 
By the cycle lemma, for all $n$ we have 
\[
\psub{1}{\sigma=n} = \frac{1}{n} \p{S_n=n-1},
\]
which for $s \ge s_0$ yields 
\[
\psub{1}{H(\sigma) \ge x \sigma^{1/2},\sigma \ge s}
\le \sum_{n \ge s}
\psub{1}{\sigma=n} \le \frac{x}4 \sum_{n \ge s} n^{-3/2} 
\le \frac{x}{8 (s-1)^{1/2}}. 
\]
For $x < (2\eps)^{1/2}$ we have $e^{-x^2/\eps} > e^{-2} > 1/8$, so for $s$ sufficiently large the preceding bound is at most $(x/s^{1/2})e^{-x^2/\eps}$. 
\end{proof}

\section{Proofs of the main theorems}\label{sec:proofs}
\begin{proof}[Proof of Theorem~\ref{thm:general}]
First, note that if $\mu$ is supercritical then conditionally given that $|T|=\infty$, almost surely 
$\h(T)=\w(T)=\infty$. On the other hand, writing $\hat{\mu}$ for the measure with $\hat{\mu}(i) = \mu(i)q^{i-1}$, where $q=\p{|T|<\infty}$, then 
given that $|T|<\infty$, the conditional law of $|T|$ is 
$\mathrm{GW}(\hat{\mu})$, and $\hat{\mu}$ is subcritical or critical. Since the bounds of the theorem only depend on $\mu$ through $\mu(1)$, and $\hat{\mu}(1)=\mu(1)$, it thus suffices to prove the theorem for critcal and subcritical trees.

In light of the preceding paragraph, the second bound is now immediate from Propositions~\ref{bfq_equiv} and~\ref{prop:ht_bd} and Theorem~\ref{thm:hvol_finvar}. For the first, we use that for any tree $T$, $\h(T)\w(T) \ge |T|$. 
We then have
\begin{align*}
\p{\h(T) \ge (Cx)^2 \frac{\w(T)}{1-\mu(1)}} & \le 
\p{\h(T)^2 \ge (Cx)^2 \frac{|T|}{1-\mu(1)}} \\
& = \p{\h(T) \ge Cx \frac{|T|^{1/2}}{(1-\mu(1))^{1/2}}}\, ,
\end{align*}
which yields the first bound. 
\end{proof}
\begin{proof}[Proof of Theorem~\ref{thm:fixed_var}]
Propositions~\ref{bfq_equiv} and~\ref{prop:ht_bd} and Theorem~\ref{thm:var_precise} together 
imply that there exists $x_0$ depending only on $\mu$ such that for all $x \ge x_0$ and all $n \ge 1$, 
\[
\p{\h(T) \ge x|T|^{1/2}, |T| \ge n} \le \frac{Cx}{n^{1/2}}e^{-vx^2/C}\, ,
\]
where $C>0$ is a universal constant. For such $x$ we then have 
\[
\p{\h(T) \ge (Cx)|T|^{1/2}} = 
\p{\h(T) \ge (Cx)|T|^{1/2}, |T| \ge (Cx)^2} 
\le 
e^{-Cvx^2}.
\]
This proves the second probability bound, and the first bound follows by the same argument used in proving Theorem~\ref{thm:general}. 

Now suppose $\mu$ is critical, and let $(S(v_{i+1}),0 \le i < |T|)$ be the breadth-first queue process of $T$.  
By Proposition~\ref{bfq_equiv}, this process has the same law as $(S_i,0 \le i < \sigma)$, where $(S_i,i \ge 0)$ is a random walk with $S_0=1$ and jump distribution $\nu$ defined by $\nu(i)=\mu(i+1)$, and $\sigma=\inf\{n: S_n = 0\}$. Since $\mu$ is critical, $(S_i,i \ge 0)$ is centered, so by the local central limit theorem there is $n_0$ such that for all $n \ge n_0$,   
$\psub{1}{S_n=0} \ge (2vn)^{-1/2}$. By the cycle lemma, we then have 
\begin{align*}
\p{|T|\ge n} & = \psub{1}{\sigma\ge n}
\\
&= \sum_{m \ge n} \psub{1}{\sigma=m} \\
&= \sum_{m \ge n} \frac{1}{m}\psub{1}{S_m=0}\\
&\ge \sum_{m \ge n} \frac{1}{(2v)^{1/2}m^{3/2}} \\
& \ge \frac{1}{(8v)^{1/2} n^{1/2}}\, .
\end{align*}

Combined with the above probability bound, we obtain that for $n\ge n_0$, 
\[
\p{\h(T) \ge x|T|^{1/2}~|~|T| \ge n} \le 
C'v^{1/2}x e^{-vx^2/C},
\]
for some absolute constant $C'$. For $n < n_0$ 
we have $\p{|T| \ge n} \ge (8vn_0)^{-1/2}$, so  
\[
\p{\h(T) \ge x|T|^{1/2}~|~|T| \ge n} \le 
(8vn_0/n)^{1/2} Cx e^{-vx^2/C}\, .
\]
For $x$ sufficiently large we have
\[
C'v^{1/2}x e^{-v(Cx)^2/C}
\le e^{-vx^2}\,\,\mbox{and}\,\, 
(8vn_0)^{1/2}Cxe^{-v(Cx)^2/C} \le e^{-vx^2}\, ,
\]
so the final bound follows. 
\end{proof}
\begin{proof}[Proof of Theorem~\ref{thm:stable}]
The second bound is immediate from Propositions~\ref{bfq_equiv} and~\ref{prop:ht_bd} and Theorem~\ref{thm:stable_attempt}. For the first, arguing as in 
 Theorem~\ref{thm:general}, we have 
 \begin{align*}
\p{\h(T) \ge  x \left(\frac{C M}{\alpha-1}\right)^{\alpha} \w(T)^{\alpha-1}}
& \le \p{\h(T)^{\alpha} \ge x \left(\frac{C M}{\alpha-1}\right)^{\alpha} |T|^{\alpha-1}}\\
& 
= 
\p{\h(T) \ge x^{1/\alpha} \frac{C M}{\alpha-1} |T|^{(\alpha-1)/\alpha}} \\
& \le e^{-x}\, ,
\end{align*}
which is the first bound. 
\end{proof}
\begin{proof}[Proof of Theorem~\ref{thm:inf_var}]
The second bound is immediate from Propositions~\ref{bfq_equiv} and~\ref{prop:ht_bd} and Corollary~\ref{cor:inf_var_generic}. We deduce the first as in the other theorems: since $\h(T)\w(T) \ge |T|$, 
\[
\p{\h(T) \ge x\w(T),\sigma \ge s} 
 \le \p{\h(T)^2 \ge x|T|,\sigma \ge s},
\]
from which the first bound follows. 
\end{proof}

\section{Conclusion}\label{sec:conc}
This section provides a few pointers to related work, and suggests open questions related to the results presented above, as well as some potential strengthenings of said results. 
\begin{enumerate}
\item 
Write $\GW_n(\mu)$ for the law of a Galton-Watson tree with offspring distribution $\mu$ conditioned to have size exactly $n$. A natural question is whether the above theorems can be shown to hold for $T_n \sim \GW_n(\mu)$. In any case where this is possible, it yields a stronger result, as the corresponding theorem for $T\sim \GW(\mu)$ can then be obtained by a suitable averaging over $n$. 
\begin{itemize}
\item[(a)] For Theorem~\ref{thm:general}, such a generalization is false; if $\mu(0)+\mu(1)=1$ then $T_n$ is almost surely a path of length $n$. 
An extension to conditioned Galton-Watson trees may still be possible, but its bounds must include some dependence on both $\mu(0)$ and $\mu(1)$. 

\item[(b)] As mentioned in the introduction, when $\mu$ is critical with finite variance, sub-Gaussian tail bounds for $\h(T_n)/n^{1/2}$ were proved in \cite{addario-berry2013}. It should be possible to strengthen the bounds of \cite{addario-berry2013} to exhibit the same dependence on the variance $v$ as in Theorem~\ref{thm:fixed_var}; such a strengthening would then yield Theorem~\ref{thm:fixed_var} as a corollary. 

\item[(c)] The work \cite{kortchemski17} provides a version of Theorem~\ref{thm:stable} which applies to conditioned Galton-Watson trees. However, this result requires that $\mu$ is in the domain of attraction of a stable law; it should be possible to weaken this requirement, insisting only on upper bounds for the tail probabilities of the offspring distribution. 

\item[(d)] 
As for an analogue of Theorem~\ref{thm:inf_var}, it should hold that 
for all $\eps$ there exists $n_0=n_0(\eps)$ such that for $n \ge n_0$, 
\[
\p{\h(T_n) > xn^{1/2}} \le e^{-x^2/\eps}\, \, \mbox{and} 
\p{\h(T_n) > x\w(T_n)} e^{-x/\eps}. 
\]
These bounds would imply that $\h(T_n)/n^{1/2} \to 0$ in probability and that $\w(T_n)/n^{1/2} \to \infty$ in probability; both limits were conjectured by Janson (see \cite{MR2908619}, Conjectures~21.5 and~21.6)). 
\end{itemize}
\item An intuition which motivated the development of this paper is that for Galton-Watson trees $T$ we expect that $\w(T)\cdot \h(T)$ is typically of order $|T|$. This is true, for example, for conditioned Galton-Watson trees in the domain of attraction of a stable tree. However, in general this need not hold. For example, suppose that $\mu(0)=1/2$, $\mu(1)=1/4$  and $\mu(2^{2^{2^i}})=1/2^i$. Taking $n=2^{2^{2^i}}$, the tree $T_{2n}$ typically contains exactly one node of degree $n$; it has width of order $n$ and height of order $\log n/\log\log n$. Are there examples where of offspring distributions for which $\w(T_n)\h(T_n)/n$ is much larger than $\log n/\log\log n$ with non-vanishing probability?
\item More generally, the range of possible joint behavior of $\h(T_n)$ and $\w(T_n)$ for supercritical conditioned Galton-Watson trees is unclear, and deserves investigation. 
\end{enumerate}
It is not impossible that an extension of the techniques of the current paper could be used to tackle some of the above questions. Our approach essentially requires bounds on the amount of time the random walk associated to the breadth-first queue process spends at small scales. Thus, implementing this for conditioned Galton-Watson trees would require, in particular, universal bounds on how much time a random walk conditioned to first visit $0$ at time $n$ is likely to spend at a given scale. 

\section*{Acknowledgements}
Thank you, Igor Kortchemski and Yuval Peres, for useful discussions during the preparation of this paper. 

This research was funded in part by an NSERC Discovery Grant. Part of the research was carried out at the Isaac Newton Institute for Mathematical Sciences during the programme ``Random Geometry'', supported by EPSRC Grant Number EP/K032208/1. My interest in this problem was sparked during a workshop at McGill's Bellairs research institute in Holetown, Barbados. Thanks are due to all the above institutions and agencies for their support. 

\bibliographystyle{plainnat}

\end{document}